\theoremstyle{plain}
\newtheorem{theorem}{Theorem}[section]
\newtheorem{lemma}{Lemma}[section]
\newtheorem{corollary}{Corollary}[section]
\newtheorem{remark}{Remark}[section]
\newtheorem{definition}{Definition}[section]
\newcommand{\eps}{\varepsilon}
\newcommand{\authorfootnotes}{\renewcommand\thefootnote{\@fnsymbol\c@footnote}}%
\newcommand{\be} {\begin{equation}}
\newcommand{\ee} {\end{equation}}
\newcommand{\bea} {\begin{eqnarray}}
\newcommand{\eea} {\end{eqnarray}}
\newcommand{\Bea} {\begin{eqnarray*}}
\newcommand{\Eea} {\end{eqnarray*}}
\newcommand{\de} {\delta}
\newcommand{\ga} {\gamma}
\newcommand{\Ga} {\Gamma}
\newcommand{\De}{\Delta}
\newcommand{\no} {\nonumber}
\newcommand{\lab} {\label}
\newcommand{\Rn}{\mathbb R^N}
\numberwithin{equation}{section} \allowdisplaybreaks
\begin{document}
    
    \title[Integral representation for fractional Hardy equation]{Integral representation using Green function for fractional Hardy equation}

\author[Mousomi Bhakta]{Mousomi Bhakta$^\dag$}
\address{$^\dag$ Department of Mathematics, Indian Institute of Science Education and Research, Dr. Homi Bhaba Road, Pune-411008, India}
\email{mousomi@iiserpune.ac.in}

\author[Anup Biswas]{Anup Biswas$^\dag$}
\email{anup@iiserpune.ac.in}

\author[Debdip Ganguly]{Debdip Ganguly$^\dag$}
\email{debdip@iiserpune.ac.in}

\author[Luigi Montoro]{Luigi Montoro$^\ddag$}
\address{$^\ddag$ Dipartimento di Matematica e Informatica, Unical, Ponte Pietro Bucci 31 B, 87036 Arcavacata
di Rende, Cosenza, Italy.}
\email{montoro@mat.unical.it}

\subjclass[2010]{35S05; 35C15; 47D06; 35A08; 35J08; 60J35}
\keywords{Fractional Laplacian, Hardy operator, Green function, integral representation, Hardy equation, semigroup.}
\date{}

\maketitle

\begin{abstract}
Our main aim is to study Green function for the fractional Hardy operator 
 $P:=(-\De)^s -\frac{\theta}{|x|^{2s}}$ in $\Rn$, where $0<\theta<\Lambda_{N,s}$ and $\Lambda_{N,s}$ is the best constant in the fractional Hardy inequality. Using Green function, we also show that the integral representation of the weak solution holds.
\end{abstract} 

\section{Introduction}
The aim of this work  is to show that the potential theoretic solution for fractional Hardy equation coincides with the weak solution and it admits an integral representation  using the  Green function of the fractional Hardy operator. There is a wide literature regarding problems involving the fractional Hardy potential. Avoiding to disclose the discussion  we refer to the following (far from being complete) list  of works and  references therein \cite{AABP, AADP, AMP, AMP1, BDK, GGJP,  DMPS, FF1, FF, FLS-1}.

In this work, we consider the following fractional Hardy equation
\begin{equation}\label{main-semi}
  \tag{$\mathcal P$}
\left\{\begin{aligned}
 (-\De)^s u -\theta\frac{u}{|x|^{2s}}&= \varphi(x) \quad\text{in }\,\,\Rn \\
            u &\in \dot{H}^s(\Rn), \no
 \end{aligned}
  \right.
\end{equation}
where $s\in(0,1)$ is fixed, $N>2s$,  $\varphi\in C_c(\Rn)$ and $(-\De)^s$ denotes the  fractional Laplace operator which can be defined for the Schwartz class functions $\mathcal{S}(\Rn)$  as follows:
\begin{equation} \label{De-u}
  \left(-\Delta\right)^su(x): = c_{N,s} 
\, \text{P.V.} \int_{\Rn}\frac{u(x)-u(y)}{|x-y|^{N+2s}} \, {\rm d}y, \quad c_{N,s}= \frac{4^s\Ga(N/2+ s)}{\pi^{N/2}|\Ga(-s)|}.
\end{equation}
Here $\theta\in(0, \Lambda_{N,s})$, where 
$$\Lambda_{N,s}:=2^{2s}\frac{\Gamma^2(\frac{N+2s}{4})}{\Gamma^2(\frac{N-2s}{4})}$$ denotes the sharp constant in the Hardy inequality 
$$\Lambda_{N,s} \, \int_{\Rn} \, \frac{|u(x)|^2}{|x|^{2s}} \, {\rm d}x \leq \int_{\Rn}|\xi|^{2s}|\mathcal{F}(u)(\xi)|^2 \, {\rm d}\xi.$$ In above inequality $\mathcal{F}(u)$ denotes the Fourier transform of $u$. 

It is well-known that (see \cite{GGJP}, \cite{FLS-1}) if  $0<\theta\leq\Lambda_{N,s}$, then there exists a unique number $\ga$ such that \be\lab{psi-theta}
0<\ga\leq\frac{N-2s}{2} \quad\text{and}\quad
\theta=2^{2s}\frac{\Gamma(\frac{\ga+2s}{2})\Gamma(\frac{N-\ga}{2})}{\Gamma(\frac{N-\ga-2s}{2})\Gamma(\frac{\ga}{2})}.\ee

Further, $\theta=\Lambda_{N,s}$ implies $\ga=\frac{N-2s}{2}$. Therefore in this paper we consider the range 
$$0<\ga<\frac{N-2s}{2}.$$
This constant $\ga$ plays an important role in the analysis of fractional Hardy equations (see \cite{FLS-1}) and it will appear in several equations below.

\begin{definition}
Let $s\in(0,1)$. We define the homogeneous fractional Sobolev space of order $s$
as $$\dot{H}^s(\Rn):=\bigg\{u\in L^{2^*_s}(\Rn) \,:\, \int_{\Rn}|\xi|^{2s}|\mathcal{F}(u)|^2 \, {\rm d}\xi<\infty  \bigg\},\quad \text{where}\;\; 2^*_s= \frac{2N}{N-2s}.$$
In particular, $\dot{H}^s(\Rn)$ is  the completion of $C^\infty_c(\Rn)$ under the norm 
\be\lab{dot-H}\|u\|_{\dot{H}^s(\Rn)}^2:=\int_{\Rn}|\xi|^{2s}|\mathcal{F}(u)|^2 \, {\rm d}\xi.\ee
\end{definition}
It is also known that (see \cite{FLS-1}) for $s\in (0,1)$, $N\geq 1$, it holds
$$\int_{\Rn}|\xi|^{2s}|\mathcal{F}(u)|^2 \, {\rm d}\xi=\frac{c_{N,s}}{2}
\int_{\Rn}\int_{\Rn}\frac{|u(x)-u(y)|^2}{|x-y|^{N+2s}} \, {\rm d}x \, {\rm d}y, \quad \forall \, u\in \dot{H}^s(\Rn).$$

\begin{definition}
We say that $u\in \dot{H}^s(\Rn)$ is a weak solution of \eqref{main-semi} if for every $\psi \in C_c^{\infty}(\mathbb{R}^N)$ there holds 
$$
\frac{c_{N,s}}{2}
\int_{\Rn}\int_{\Rn}\frac{(u(x)-u(y))(\psi(x)-\psi(y))}{|x-y|^{N+2s}}\, {\rm d}x\, {\rm d}y\, - \,  \theta \, \int_{\Rn} \frac{u \psi}{|x|^{2s}} \, {\rm d}x= \int_{\Rn} \varphi(x) \, \psi(x) \, {\rm d}x.
$$
\end{definition}

\begin{remark}\lab{r:1}
{\rm 
Since $\theta<\Lambda_{N,s}$ implies, norm in \eqref{dot-H} is equivalent to
$$\bigg(\frac{c_{N,s}}{2}\int_{\Rn}\int_{\Rn}\frac{|u(x)-u(y)|^2}{|x-y|^{N+2s}}\, {\rm d}x\, {\rm d}y\, - \,  \theta \, \int_{\Rn} \frac{|u|^2}{|x|^{2s}}\, {\rm d}x \bigg)^\frac{1}{2},$$ it is easy to see that 
the weak solution of  \eqref{main-semi} is unique.  
}
\end{remark}

\medskip 

\begin{definition}
Let $s\in(0,1)$. We define the fractional Sobolev space of order $s$
as $$ H^s(\Rn):=\bigg\{u\in L^{2}(\Rn) \,:\, \int_{\Rn}\int_{\Rn}\frac{|u(x)-u(y)|^2}{|x-y|^{N+2s}} \, {\rm d}x \, {\rm d}y<\infty\bigg\},$$
and 
$$\|u\|_{{H}^s(\Rn)}^2:=\|u\|_{L^2(\Rn)}^2+ \int_{\Rn}\int_{\Rn}\frac{|u(x)-u(y)|^2}{|x-y|^{N+2s}} \, {\rm d}x \, {\rm d}y.$$
\end{definition}

As mentioned above we would be interested in the operator
$$P:=(-\De)^s u -\theta\frac{u}{|x|^{2s}}, \quad 0<\theta<\Lambda_{N,s}.$$
Recently, it is shown in \cite{GGJP} that for $0<\theta\leq\Lambda_{N,s}$, $-P$ generates 
a strongly continuous contraction semigroup $\{\tilde S(t)\}$ in $L^2(\Rn)$. Furthermore,
heat kernel of $-P$ also exists in $\Rn\setminus\{0\}$ i.e., there exists a 
 nonnegative function $p(t, x,y)$ such that
\begin{equation}\label{eq:13-06-2}
\tilde{S}(t) g=\int_{\Rn} p(t, x, y) g(y) \,  {\rm d}y, \quad t>0,\quad g\in L^2(\Rn).
\end{equation}
see \cite{GGJP} for more details.
%
%
%
%

\begin{definition}\label{def:Green}
We say that a function  
$$G: \Rn\setminus\{0\}\times \Rn\setminus\{0\}\longrightarrow [-\infty, \infty]$$ is a Green function for the operator $P$, if 
\begin{itemize}
\item[$(i)$]
$$G(x,y)=G(y,x), \quad\forall\, y,x\neq 0;$$  
\item[$(ii)$] for $x\neq 0$ , $$ G(x,.)\in L^1_{loc}(\Rn), \quad\text{and}\quad \int_{\Rn}\frac{|G(x,y)|}{1+|y|^{N+2s}} \, {\rm d}y<\infty;$$
\item[$(iii)$] for any  $x\neq 0$, it holds
$$P G_P(x, \cdot)=\delta_x\quad \text{in}\quad \mathbb{R}^N,$$
in the sense of distribution.
\end{itemize}
\end{definition}

As usual in potential theory, we define (see also Lemma \ref{l:heat-ker} below) the  function $G_P(x, y)$ as follows
\be\lab{Green}G_P(x, y):=\int_0^\infty p(t, x, y) \, {\rm d}t, \quad x,y\in\Rn\setminus\{0\},\,\, x\neq y.\ee

It is also known that heat kernel is symmetric i.e.,  $p(t, y, z)=p(t,z,y)$ (see \cite[pg 6]{GGJP}) and this implies  $G_P(x, y)=G_P(y, x)$.

\begin{remark}
It is also common in literature (see \cite[Appendix D]{GR}) to define Green function using integral representation. More precisely, a symmetric $G: \Rn\setminus\{0\}\times \Rn\setminus\{0\}\longrightarrow [-\infty, \infty]$ is said to be Green function
of $P$ if 
\begin{itemize}
\item [$(i)$] $G(x,\cdot)\in L^1_{loc}(\Rn)$  a.e. $x$;
\item [$(ii)$] for all $\varphi \in C_c(\Rn)$, the function 
\begin{equation}\nonumber
x\longrightarrow  \int_{\Rn}   G(x, y)\varphi (y) \, {\rm d}y, \quad  \text{a.e.}\,\, x,
\end{equation}
belongs to $\dot H^s(\mathbb R^N)$ and it is the unique weak solution to the equation \eqref{main-semi}.
\end{itemize}
We show in Theorem~\ref{t:int form} that $G_P$ in \eqref{Green} does satisfy these conditions.
\end{remark}

\vspace{2mm}

Below we state the main results of this paper.  
\begin{theorem}\lab{t:int form}
Let $0<\theta<\Lambda_{N,s}$, $\varphi \in C_c(\Rn)$ and 
\begin{equation}\nonumber\psi (x) := \int_{\Rn}   G_P(x, y)\varphi (y) \, {\rm d}y, \quad x\neq 0,\end{equation}  where $G_P$ is as defined in \eqref{Green}.
Then $\psi\in \dot{H}^s(\Rn)$ and $P  \psi = \varphi$ in $\Rn$ holds in the weak sense.
\end{theorem}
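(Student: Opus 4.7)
The plan is to realize $\psi$ as the time integral of the semigroup $\tilde{S}(t)$ applied to $\varphi$, use $L^2$-semigroup calculus on its truncations, and extract the $\dot{H}^s$ regularity from the Hardy-based coercivity of Remark \ref{r:1}.

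First, I would use Fubini (valid since $G_P(x,\cdot)\in L^1_{loc}(\Rn)$ by Lemma \ref{l:heat-ker} and $\varphi\in C_c(\Rn)$) to write $\psi(x)=\int_0^\infty \tilde{S}(t)\varphi(x)\,{\rm d}t$ and introduce the truncations $\psi_T(x):=\int_0^T \tilde{S}(t)\varphi(x)\,{\rm d}t$. Standard semigroup theory will give $\psi_T\in D(P)$ with
$$P\psi_T \;=\; \varphi-\tilde{S}(T)\varphi \quad\text{in } L^2(\Rn),$$
and clearly $\psi_T(x)\to\psi(x)$ a.e. as $T\to\infty$.

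Next, identifying $P$ as the Friedrichs extension of the bilinear form
$$B[u,v]:=\frac{c_{N,s}}{2}\iint \frac{(u(x)-u(y))(v(x)-v(y))}{|x-y|^{N+2s}}\,{\rm d}x\,{\rm d}y \,-\, \theta\int\frac{uv}{|x|^{2s}}\,{\rm d}x,$$
we will have $D(P)\subset H^s(\Rn)\subset\dot{H}^s(\Rn)$, so $\psi_T\in\dot{H}^s(\Rn)$. Remark \ref{r:1} together with the Sobolev embedding $\dot{H}^s\hookrightarrow L^{2^*_s}$ then yields
$$\|\psi_T\|^2_{\dot{H}^s}\lesssim B[\psi_T,\psi_T] = \langle P\psi_T,\psi_T\rangle \leq \|\varphi-\tilde{S}(T)\varphi\|_{L^{(2^*_s)'}}\,\|\psi_T\|_{\dot{H}^s},$$
whence $\|\psi_T\|_{\dot{H}^s}\leq C\|\varphi-\tilde{S}(T)\varphi\|_{L^{(2^*_s)'}}$, which should be bounded uniformly in $T$ via the heat-kernel estimates of \cite{GGJP} applied to $\varphi\in L^1\cap L^\infty$.

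Finally, along a subsequence $\psi_{T_k}\rightharpoonup\psi^\ast$ in $\dot{H}^s(\Rn)$; local compactness of the Sobolev embedding produces a further subsequence converging a.e., and comparison with the pointwise limit forces $\psi^\ast=\psi\in\dot{H}^s(\Rn)$. Pairing $P\psi_T = \varphi-\tilde{S}(T)\varphi$ with $v\in C_c^\infty(\Rn)$ and letting $T\to\infty$ will deliver $B[\psi,v]=\int\varphi v$, the desired weak equation. The main obstacles are (i) identifying $P$ with the Friedrichs extension of $B$, needed to move between the semigroup and variational frameworks, and (ii) the decay $\tilde{S}(T)\varphi\to 0$ in $L^2$; the latter follows from the spectral theorem once one checks that $\ker P=\{0\}$ in $L^2$ (any $u\in L^2\cap\ker P$ lies in the form domain $H^s$ with $B[u,u]=0$, hence vanishes by coercivity). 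Both (i) and (ii) rest on the strict gap $\theta<\Lambda_{N,s}$, which provides the coercivity on $\dot{H}^s$ and excludes a spectral atom at $0$ for $P$.
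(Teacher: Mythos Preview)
Your proposal is essentially correct and outlines a valid alternative, but the route differs from the paper's in a way worth noting. The paper regularizes via the resolvent $u_\alpha=R(\alpha)\varphi=\int_0^\infty e^{-\alpha t}\tilde S(t)\varphi\,{\rm d}t$ and sends $\alpha\to 0$, whereas you truncate in time and send $T\to\infty$. The practical payoff of the paper's choice is that it sidesteps both of your flagged obstacles. For obstacle (i), the paper never invokes a Friedrichs identification: it only uses $u_\alpha\in D(\tilde{\mathcal E})$ together with the pointwise bound $|u_\alpha(x)|\le\int G_P(x,y)|\varphi(y)|\,{\rm d}y$ and the Green-function asymptotics (their Lemma~\ref{L2-hardy}) to verify $\int u_\alpha^2/|x|^{2s}<\infty$ directly, after which Remark~\ref{r:imp} yields $u_\alpha\in D(\mathcal E)=H^s$; the identical argument applies verbatim to your $\psi_T$, so you can bypass the Friedrichs question entirely. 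For obstacle (ii), the resolvent produces the harmless extra term $\alpha\langle u_\alpha,f\rangle$, which vanishes simply because $\sqrt{\alpha}\,u_\alpha$ stays $L^2$-bounded; no spectral information or large-time kernel decay is required. The paper also obtains its uniform $\dot H^s$ bound by the elementary device of writing $\int_K\varphi\,u_\alpha=\int_K|x|^s\varphi\cdot u_\alpha/|x|^s$ and absorbing a small multiple of the Hardy term, which avoids your $L^{(2^*_s)'}$ estimate on $\tilde S(T)\varphi$ (that estimate does follow from the heat-kernel bounds, but is less immediate). In short: your scheme is conceptually clean and works once (i)--(ii) are supplied, while the paper's resolvent device together with the concrete estimate of Lemma~\ref{L2-hardy} is more self-contained and dodges those issues altogether.
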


Moreover, by the previous result, we are able to show a representation formula for the weak solution to \eqref{main-semi}. We have
\begin{corollary}\label{thm-1}
Let $0<\theta<\Lambda_{N,s}$ and  $u$ be a weak solution of the equation \eqref{main-semi}. Then there holds
$$u(x) = \int_{\Rn} G_{P}(x, y) \varphi(y) \, {\rm d}y, \quad x\neq 0,$$
where $G_P$ is as in Theorem \ref{t:int form}. 
\end{corollary}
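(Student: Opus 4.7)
The plan is to deduce the corollary as an immediate consequence of Theorem~\ref{t:int form} combined with the uniqueness of the weak solution recorded in Remark~\ref{r:1}. Define
\[
\psi(x) := \int_{\Rn} G_P(x,y)\varphi(y)\, {\rm d}y, \qquad x\neq 0.
\]
By Theorem~\ref{t:int form} we already know two things: first, $\psi \in \dot{H}^s(\Rn)$, and second, $P\psi = \varphi$ holds in $\Rn$ in the weak sense. In other words, $\psi$ is itself a weak solution of \eqref{main-semi}.

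Now I would invoke Remark~\ref{r:1}, which states that the bilinear form associated with $P$ induces a norm on $\dot{H}^s(\Rn)$ equivalent to the standard one (this is exactly where the restriction $\theta<\Lambda_{N,s}$ is used), so the weak formulation defines a coercive continuous bilinear form on $\dot{H}^s(\Rn)$. Since $\varphi\in C_c(\Rn)$ also defines a continuous linear functional on $\dot{H}^s(\Rn)$ (using $\varphi\in L^{(2^*_s)'}(\Rn)$ and the Sobolev embedding $\dot{H}^s(\Rn)\hookrightarrow L^{2^*_s}(\Rn)$), the Lax--Milgram theorem guarantees that the weak solution of \eqref{main-semi} is unique. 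Therefore, if $u$ is the given weak solution, both $u$ and $\psi$ solve \eqref{main-semi} weakly in $\dot{H}^s(\Rn)$, and uniqueness forces $u = \psi$ almost everywhere.

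There is essentially no obstacle in this argument, as the entire analytic content has been packaged into Theorem~\ref{t:int form}; the corollary is a bookkeeping step that combines existence (supplied by the integral representation) with uniqueness (supplied by coercivity). The only mild subtlety worth remarking on in the write-up is that the identity $u(x)=\psi(x)$ holds pointwise for $x\neq 0$ because $\psi$ is defined pointwise by the Green function integral, while $u$ is an element of $\dot{H}^s(\Rn)$; one interprets the equality in the a.e.\ sense, or equivalently identifies $u$ with its pointwise representative given by the integral formula.
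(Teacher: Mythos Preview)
Your proof is correct and follows essentially the same approach as the paper: invoke Theorem~\ref{t:int form} to see that $\psi$ is a weak solution, then use the uniqueness from Remark~\ref{r:1} to conclude $u=\psi$. The additional details you supply (Lax--Milgram, the a.e.\ interpretation) are reasonable elaborations but not required, as the paper simply cites Theorem~\ref{t:int form} and Remark~\ref{r:1} directly.
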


\begin{theorem}\lab{l:fund-sol}
Let  $0<\theta<\Lambda_{N,s}$, $x\neq 0$ and $G_P$ be defined as in \eqref{Green}. Then it holds that
$$P G_P(x, \cdot)=\delta_x\quad \text{in}\quad \mathbb{R}^N,$$
in the sense of distribution.
\end{theorem}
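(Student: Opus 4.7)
My plan is to deduce Theorem~\ref{l:fund-sol} from Theorem~\ref{t:int form} combined with the symmetry of $G_P$. Fix $\eta\in C_c^\infty(\Rn)$; the goal is to prove
$$\int_{\Rn}G_P(x,y)\,P\eta(y)\,dy \;=\; \eta(x)\qquad\text{for each }x\neq 0,$$
which is exactly $PG_P(x,\cdot)=\delta_x$ in the sense of distributions on $\Rn$.

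Introduce an auxiliary $\phi\in C_c(\Rn)$ and set $\psi_\phi(z):=\int_{\Rn}G_P(z,y)\,\phi(y)\,dy$. By Theorem~\ref{t:int form}, $\psi_\phi\in\dot H^s(\Rn)$ and $P\psi_\phi=\phi$ weakly. Plugging $\eta$ into that weak formulation, and rewriting the fractional Dirichlet form as $\int\psi_\phi(-\De)^s\eta\,dz$ (legitimate since $\eta\in C_c^\infty$ gives $(-\De)^s\eta\in L^\infty\cap L^1$ with $|x|^{-N-2s}$ decay at infinity, while $\psi_\phi\in L^{2^*_s}$; the Hardy contribution is finite because $\psi_\phi/|z|^s\in L^2$ by the fractional Hardy inequality and $\eta/|z|^s\in L^2$ by compact support), one obtains
$$\int_{\Rn}\psi_\phi(z)\,P\eta(z)\,dz \;=\; \int_{\Rn}\phi(z)\eta(z)\,dz.$$
Unwrapping $\psi_\phi$ and applying Fubini — justified by $G_P\ge 0$, $\phi\in C_c$, the bound $|P\eta(z)|\le C\bigl(|z|^{-2s}\mathbf{1}_{|z|\le R}+|z|^{-N-2s}\mathbf{1}_{|z|>R}\bigr)$, and the local integrability $G_P(\cdot,y)\in L^1_{loc}(\Rn)$ from Definition~\ref{def:Green}(ii) — followed by the symmetry $G_P(z,y)=G_P(y,z)$, yields
$$\int_{\Rn}\phi(y)\bigg[\int_{\Rn}G_P(y,z)\,P\eta(z)\,dz\bigg]dy \;=\; \int_{\Rn}\phi(y)\eta(y)\,dy.$$
Since $\phi\in C_c(\Rn)$ is arbitrary, the bracketed function equals $\eta(y)$ for a.e. $y\neq 0$.

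The final step is to upgrade ``a.e. $y\neq 0$'' to ``every $y\neq 0$'', as demanded by Definition~\ref{def:Green}(iii). For this I would establish continuity of the map $y\mapsto\int_{\Rn}G_P(y,z)\,P\eta(z)\,dz$ on $\Rn\setminus\{0\}$, using continuity of $p(t,\cdot,z)$ (recorded in \cite{GGJP}) together with dominated convergence in the time integral \eqref{Green}, granted uniform heat-kernel bounds on compact subsets of $\Rn\setminus\{0\}$. I expect the main obstacle to be precisely this continuity upgrade (and the closely related Fubini step): both hinge on quantitative control of $G_P$ and $p$ near the singular point $0$. A workaround, if such estimates become unwieldy, is to first prove the identity for $\eta\in C_c^\infty(\Rn\setminus\{0\})$ via the semigroup identity $\int_0^\infty \tilde S(t)P\eta\,dt=\eta$ (valid because $P\eta\in L^2$ when $\eta$ vanishes near the origin, and $\tilde S(t)\eta\to 0$ in $L^2$ as $t\to\infty$ since $0$ is not an eigenvalue of $P$), and then extend to general $\eta\in C_c^\infty(\Rn)$ by cutting off a small neighbourhood of $0$ and controlling the residual contribution through Theorem~\ref{t:int form} applied to approximate identities concentrated at $x$.
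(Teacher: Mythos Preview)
Your proposal is correct and follows essentially the same route as the paper. The paper likewise leverages Theorem~\ref{t:int form}: it takes an approximate identity $\phi_n\to\delta_x$ with $\mathrm{supp}(\phi_n)\subset B_\kappa(x)$, $\kappa<|x|/2$, sets $\psi_n(z)=\int G_P(z,y)\phi_n(y)\,dy$, derives $\int\psi_n\,Pf=\int\phi_n f$ from the weak formulation, then applies Fubini and the symmetry of $G_P$ and passes to the limit; your variant (arbitrary $\phi\in C_c$, a.e.\ equality, then continuity upgrade) is the same argument in slightly different packaging. The two technical ingredients you flag --- the Fubini justification and the continuity of $y\mapsto\int G_P(y,z)\,Pf(z)\,dz$ at $y=x$ --- are precisely what the paper spends its effort on (its Step~1 and Step~2), and both are handled not via heat-kernel regularity but directly through the two-sided estimate for $G_P$ in Lemma~\ref{l:heat-ker} together with the pointwise bound $|Pf(z)|\le C\bigl((1+|z|^{N+2s})^{-1}+|z|^{-2s}\chi_{\mathrm{supp}(f)}\bigr)$; your semigroup workaround is not needed.
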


Thanks to Theorem \ref{l:fund-sol} the function $G_P$ defined in \eqref{Green} is actually a Green function in the meaning of Definition \ref{def:Green}. Next we show that the integral representation of solution of \eqref{main-semi} holds for more general class of functions $\varphi$, namely we prove the following:

\begin{theorem}\lab{t:int-gen}
Let the function $\varphi$ in \eqref{main-semi} be such that 
$0\leq \varphi\in L^\frac{2N}{N+2s}(\Rn)$ and $\varphi$ be lower semi-continuous. Then there exists a unique weak solution to \eqref{main-semi}.

Further, if 
\begin{equation}\label{eq:27thJune}
\displaystyle\int_{\Rn}G_P(x,y)\varphi(y)\, {\rm d}y<\infty,
\end{equation}
then the solution (denote it by $\psi$) will be of the form $$\dot{H}^s(\Rn)\ni\psi(x)=\int_{\Rn}G_P(x,y)\varphi(y) \, {\rm d}y, \quad x\neq 0.$$ 
\end{theorem}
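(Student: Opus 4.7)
The plan is to decouple the statement into two parts: first, existence and uniqueness of the weak solution via a direct variational argument, and then identification with the integral representation through monotone approximation by the $C_c$ class already handled in Theorem~\ref{t:int form}.

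For existence and uniqueness, I would minimize the functional
$$J(u) := \frac{1}{2}\|u\|_P^2 - \int_{\Rn}\varphi u\,{\rm d}x,\qquad u\in\dot H^s(\Rn),$$
where $\|u\|_P := \bigl(\frac{c_{N,s}}{2}\iint|u(x)-u(y)|^2/|x-y|^{N+2s}\,{\rm d}x\,{\rm d}y - \theta\int|u|^2/|x|^{2s}\,{\rm d}x\bigr)^{1/2}$ is equivalent to the $\dot H^s$-norm by Remark~\ref{r:1}. Since $L^{2N/(N+2s)}$ is the dual of $L^{2_s^*}$ and $\dot H^s\hookrightarrow L^{2_s^*}$, the linear term $u\mapsto \int\varphi u\,{\rm d}x$ is bounded on $\dot H^s$. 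Coercivity, strict convexity and weak lower semicontinuity of $J$ then produce a unique minimizer, which is the unique weak solution; uniqueness alternatively follows directly from Remark~\ref{r:1} by testing with the difference of two candidates.

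For the representation formula, I would exploit lower semicontinuity of $\varphi\geq 0$ to select a monotone sequence $\{\varphi_n\}\subset C_c(\Rn)$ with $0\leq \varphi_n\uparrow\varphi$ pointwise. Theorem~\ref{t:int form} then yields $\psi_n(x):=\int_{\Rn} G_P(x,y)\varphi_n(y)\,{\rm d}y\in\dot H^s$ solving $P\psi_n=\varphi_n$ weakly. Testing the weak formulation with $\psi_n$ itself and combining H\"older with the Sobolev embedding gives
$$\|\psi_n\|_P^2 = \int_{\Rn}\varphi_n\psi_n\,{\rm d}x \leq \|\varphi\|_{L^{2N/(N+2s)}}\|\psi_n\|_{L^{2_s^*}} \leq C\|\varphi\|_{L^{2N/(N+2s)}}\|\psi_n\|_{\dot H^s},$$
providing a uniform $\dot H^s$-bound on $\{\psi_n\}$. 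The nonnegativity of $G_P$ (heat-kernel positivity) with monotone convergence forces $\psi_n(x)$ to increase pointwise to $\psi(x):=\int_{\Rn}G_P(x,y)\varphi(y)\,{\rm d}y$ for every $x\neq 0$, and the hypothesis \eqref{eq:27thJune} makes $\psi$ finite a.e. Along a subsequence, $\psi_n\deb\tilde\psi$ weakly in $\dot H^s$ and a.e.\ (via Rellich compactness in $H^s$ on balls), so $\tilde\psi=\psi$ and $\psi\in\dot H^s$.

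To pass to the limit in the weak formulation against any $\phi\in C_c^\infty(\Rn)$, the nonlocal bilinear form converges by weak convergence in $\dot H^s$; the Hardy term $\theta\int\psi_n\phi/|x|^{2s}\,{\rm d}x$ converges because $u\mapsto\theta\int u\phi/|x|^{2s}\,{\rm d}x$ is a bounded linear functional on $\dot H^s$ by the fractional Hardy inequality; and $\int\varphi_n\phi\,{\rm d}x\to\int\varphi\phi\,{\rm d}x$ by monotone convergence. Hence $\psi$ is the weak solution of \eqref{main-semi}, and uniqueness from the first part identifies it with the variational solution. The main obstacle I anticipate is the simultaneous management of two distinct convergence modes---the pointwise monotone convergence arising from $G_P\geq 0$, and the weak $\dot H^s$-compactness arising from the energy estimate; the integrability hypothesis \eqref{eq:27thJune} is essential for reconciling them, since without finiteness of the pointwise limit the representation integral has no meaning, even though the variational solution still exists.
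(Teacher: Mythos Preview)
Your argument is correct and follows essentially the same route as the paper. The paper omits the separate variational existence step and instead obtains the weak solution directly as the $\dot H^s$-weak limit of the approximating sequence $\psi_n$, invoking Remark~\ref{r:1} only for uniqueness; it also passes to the limit in the Hardy term via Vitali's theorem rather than by recognising $u\mapsto\int u\phi/|x|^{2s}\,{\rm d}x$ as a bounded linear functional on $\dot H^s$, but these are cosmetic differences. One small point: your appeal to monotone convergence for $\int\varphi_n\phi\,{\rm d}x\to\int\varphi\phi\,{\rm d}x$ tacitly assumes $\phi\geq 0$; for general $\phi\in C_c^\infty$ simply split $\phi=\phi^+-\phi^-$ or use dominated convergence with majorant $\varphi|\phi|\in L^1$.
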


In our next theorem, we prove the \textit{uniqueness} of Green function.

\begin{theorem}\lab{t:unique}
Let $\Phi\geq 0$ be such that for $x\neq 0$,
$$ \Phi(x,.)\in L^1_{loc}(\Rn) \quad\text{and}\quad \int_{\Rn}\frac{|\Phi(x,y)|}{1+|y|^{N+2s}} 
\, {\rm d}y<\infty$$
and it holds
$$P \Phi(x, \cdot)=\delta_x\quad \text{in}\quad \mathbb{R}^N, \quad x\neq 0,$$
in the sense of distribution. Further, if  $\Phi(x,y)=\Phi(y,x)$,  
$\Phi(x,y)$ is continuous for  $x,y\neq 0$, $y\neq x$ and
$$\psi_{\varphi}:=\int_{\Rn}\Phi(x,y)\varphi(y) \, {\rm d}y\in\dot{H}^s(\Rn)\quad\forall\quad 0\leq \varphi\in C_c^\infty(\Rn),$$
then $\psi_{\varphi}$ satisfies \eqref{main-semi} and $\Phi=G_P$ in $(\Rn\setminus\{0\})\times (\Rn\setminus\{0\})$.
\end{theorem}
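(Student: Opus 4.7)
The proof goes in three steps: (i) show $\psi_\varphi$ solves \eqref{main-semi} weakly, (ii) invoke uniqueness and Corollary \ref{thm-1} to deduce an integral identity between $\Phi$ and $G_P$, and (iii) promote this to pointwise equality via continuity. For step (i), fix $\eta\in C_c^\infty(\Rn)$ and $0\leq\varphi\in C_c^\infty(\Rn)$. Unpacking $P\Phi(x,\cdot)=\delta_x$ distributionally gives
\[\int_{\Rn}\Phi(x,y)\,P\eta(y)\,dy = \eta(x),\qquad x\neq 0.\]
Multiplying by $\varphi(x)$, integrating in $x$, exchanging the order of integration via Fubini, and using the symmetry $\Phi(x,y)=\Phi(y,x)$ produces
\[\int_{\Rn}\psi_\varphi(y)\,P\eta(y)\,dy = \int_{\Rn}\varphi(y)\,\eta(y)\,dy.\]
Since $\psi_\varphi\in\dot H^s(\Rn)$ by hypothesis, the standard identity $\int u\,(-\De)^s\eta = \frac{c_{N,s}}{2}\iint\frac{(u(x)-u(y))(\eta(x)-\eta(y))}{|x-y|^{N+2s}}\,dx\,dy$ for $u\in\dot H^s$ and $\eta\in C_c^\infty$, combined with finiteness of $\int\psi_\varphi\eta/|y|^{2s}\,dy$ via Hardy's inequality, rewrites the display in the bilinear form of the weak solution definition. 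Hence $\psi_\varphi$ is a weak solution of \eqref{main-semi} with datum $\varphi$, and in fact the unique one by Remark \ref{r:1}.

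For steps (ii)-(iii), Corollary \ref{thm-1} then yields $\psi_\varphi(x) = \int_{\Rn}G_P(x,y)\varphi(y)\,dy$ for a.e.\ $x\neq 0$, whence
\[\int_{\Rn}[\Phi(x,y)-G_P(x,y)]\,\varphi(y)\,dy = 0\]
for a.e.\ $x\neq 0$ and every $0\le\varphi\in C_c^\infty(\Rn)$. A cutoff-and-subtract argument (writing a general test function as $(\varphi + M\phi)-M\phi$ with $\phi\ge 0$ a cutoff dominating $\varphi_-$ and $M$ large) extends this to every $\varphi\in C_c^\infty(\Rn)$, so $\Phi(x,\cdot) = G_P(x,\cdot)$ in $L^1_{\mathrm{loc}}(\Rn)$ for a.e.\ $x\neq 0$. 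The assumed continuity of $\Phi$ off the diagonal, paired with the analogous continuity of $G_P$ (coming from joint continuity of $p(t,\cdot,\cdot)$ away from the origin and dominated convergence applied to \eqref{Green}), upgrades the a.e.\ identity to pointwise equality on $(\Rn\setminus\{0\})\times(\Rn\setminus\{0\})$; symmetry of both kernels absorbs the remaining null set of $x$.

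The main obstacle is justifying Fubini in step (i). Setting $K_\eta:=\mathrm{supp}\,\eta$, one has $|P\eta(y)|\le C(1+|y|)^{-N-2s}$ outside a neighborhood of $K_\eta\cup\{0\}$ (since $(-\De)^s\eta(y)=O(|y|^{-N-2s})$ as $|y|\to\infty$), while near $K_\eta$ the function $P\eta$ is bounded away from $0$ with only the mild $|y|^{-2s}$ singularity at the origin. Combined with the standing hypotheses $\Phi(x,\cdot)\in L^1_{\mathrm{loc}}(\Rn)$ and $\int|\Phi(x,y)|(1+|y|^{N+2s})^{-1}\,dy<\infty$, and continuity of $\Phi$ off the diagonal (giving uniform-in-$x\in\mathrm{supp}\,\varphi$ control of $\Phi(x,y)$ for $y$ bounded away from both $x$ and $0$), this yields absolute integrability of $\varphi(x)\Phi(x,y)P\eta(y)$ on $\Rn\times\Rn$, legitimizing the exchange. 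A parallel check ensures that $\psi_\varphi$ is well-defined as a pointwise integral for $x\neq 0$ in the first place.
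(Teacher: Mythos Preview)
Your overall strategy coincides with the paper's: test $P\Phi(x,\cdot)=\delta_x$ against $\eta\in C_c^\infty$, multiply by $\varphi(x)$, integrate, exchange the order, use $\psi_\varphi\in\dot H^s$ to rewrite in bilinear form, then invoke uniqueness (Remark~\ref{r:1}) and Corollary~\ref{thm-1}, and finally upgrade to pointwise equality via continuity of both kernels.

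The one genuine gap is your Fubini justification. You invoke the fixed-$x$ hypotheses $\Phi(x,\cdot)\in L^1_{\mathrm{loc}}$, $\int\Phi(x,y)(1+|y|^{N+2s})^{-1}\,dy<\infty$, and off-diagonal continuity of $\Phi$; but these do not control the double integral. Continuity bounds $\Phi$ only on compact subsets of $\{x,y\neq 0,\ x\neq y\}$ and leaves the tube $|y-x|<\epsilon$, the strip $|y|<\epsilon$ (where both $\Phi(x,y)$ and the Hardy term $|y|^{-2s}$ in $P\eta$ may blow up), and the tail $|y|>R$ uncontrolled uniformly in $x\in\mathrm{supp}\,\varphi$. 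In particular nothing in the hypotheses bounds $\int_{B_\delta(0)}\Phi(x,y)|y|^{-2s}\,dy$, nor gives any integrability in $x$ of the map $x\mapsto\int\Phi(x,y)|P\eta(y)|\,dy$. The paper instead applies Tonelli in the \emph{other} order and uses the hypothesis $\psi_\varphi\in\dot H^s$ directly: by symmetry $\int\varphi(x)\Phi(x,y)\,dx=\psi_\varphi(y)$, so the iterated integral equals $\int|P\eta(y)|\,\psi_\varphi(y)\,dy$; with $|P\eta(y)|\le C\big[(1+|y|^{N+2s})^{-1}+|y|^{-2s}\chi_{\mathrm{supp}\,\eta}\big]$, the first piece is finite by H\"older (since $\psi_\varphi\in L^{2^*_s}$) and the second by Cauchy--Schwarz plus the Hardy inequality (since $\int\psi_\varphi^2|y|^{-2s}\,dy<\infty$). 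That is precisely where the $\dot H^s$ assumption on $\psi_\varphi$ does the real work; your argument never uses it for Fubini, which is why it falls short.
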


 It is well known that Green functions are closely related to fundamental solutions.
For $\theta=0$, using Fourier transform, it can be shown  (see \cite[Theorem 2.3]{B}) that 
\begin{equation*}
\Phi_F(x)=\left\{
\begin{array}{lll}
a(N, s) \, |x|^{-N+2s} & \text{for} \; n\neq 2s,
\\
a(N, s) \, \log |x| & \text{for} \; n= 2s,
\end{array}
\right .
\end{equation*}
where $a(N, s)$ is a suitable normalizing constant, 
is the fundamental solution of $(-\De)^s$ in~$\Rn$. Then the Green function is given by $\Phi(x, y)=\Phi_F(x-y)$.
Fall in \cite[Lemma 4.1]{FF1} (take $\alpha=\ga-\frac{N-2s}{2}$ in \cite{FF1}) proved that 
$$\tilde\Phi(x)=\frac{1}{|x|^{N-2s-\ga}},$$ 
solves  $P\tilde\Phi=0$ in $\Rn\setminus\{0\}$. This seems some what related to fundamental solution. 
The main difficulty is to guess the right candidate for the Green function. Though the heat-kernel of fractional Hardy operator is obtained recently in \cite{GGJP},
we neither know any regularity property of the heat-kernel nor if it solves the corresponding heat equation. This is another difficulty in 
considering the standard approach of (local PDE) in getting the Green function.
To obtain the Green function for $P$ we take the approach of semigroup theory which is also closely  related to the criticality theory. 


It's well-known that the integral representation of solution using Green function of the operator is very useful in studying various properties of solutions. In the case of $\theta=0$, i.e.,  for the fractional Laplace operator, integral representation of solution using Green function were used in many papers, to cite few we mention  
\cite{CDM}, \cite{CLO}, \cite{FLS}, \cite{S}, \cite{Lei}. In our forthcoming paper \cite{BGM}, we establish an asymptotic behaviour of solution for the fractional Hardy equation using the integral representation of the solution.

\medskip

{\bf Notation}: In this paper $C, \tilde C, \cdots$ denote the generic constant which may vary from line to line. We denote by $B_R(x)$ the ball cantered at $x$ and of radius $R$. By $C_c(\Rn)$ and $C_c^\infty(\Rn)$ we denote the continuous functions with compact support and $C^\infty$ functions with compact support respectively. $\de_x$ denotes the Dirac distribution centered at $x$. 

\section{Integral representation solves the equation}
In this Section we aim to prove Theorem \ref{t:int form}--Theorem \ref{t:unique}. Towards this, first we establish two sided estimate for the  Green function of fractional Hardy operator~$P$.  In context to the local case, recently in \cite{MNS}, the authors obtain sharp two sided Green function estimate for second-order elliptic operator of Fuchsian-type on $\Rn.$

\subsection{Estimate for Green function}
In the following, for two   given non negative functions $f_1,f_2$, by $f_1 \approx f_2$, we mean that there exists some positive constant $C$ such that 
$C^{-1}{f_1}\leq f_2\leq Cf_1$.  Now we prove the following 
\begin{lemma}\lab{l:heat-ker}
Let $0<\theta<\Lambda_{N,s}$ and $\ga$ the unique solution to     \eqref{psi-theta}. Then the Green function~$G_P$ given in \eqref{Green} 
is well defined. Moreover, 
\begin{equation}\nonumber
G_{P}(x, y) \approx |x-y|^{-(N-2s-2\ga)}(|x-y|^{-\ga}+|x|^{-\ga})(|x-y|^{-\ga}+|y|^{-\ga}),
\end{equation}
for  $x,y\in\Rn\setminus\{0\}$, $x\not=y$.
\end{lemma}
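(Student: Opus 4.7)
The plan is to evaluate $G_P(x,y)=\int_0^\infty p(t,x,y)\,dt$ by direct integration, using the sharp two-sided heat-kernel estimate in the subcritical range $0<\ga<(N-2s)/2$ (of Bogdan--Grzywny--Jakubowski type, recalled in \cite{GGJP}):
\begin{equation*}
p(t,x,y) \approx \Bigl(1\wedge \tfrac{t^{1/(2s)}}{|x|}\Bigr)^{\!\ga}\,\Bigl(1\wedge \tfrac{t^{1/(2s)}}{|y|}\Bigr)^{\!\ga}\, p_{0}(t,x,y),
\end{equation*}
where $p_0(t,x,y)\approx t^{-N/(2s)}\wedge \bigl(t\,|x-y|^{-(N+2s)}\bigr)$ is the free fractional heat kernel of $(-\De)^s$. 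Well-definedness of $G_P$ away from the diagonal will be a by-product of the upper bound established below.

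Fix $x\neq y$ in $\Rn\setminus\{0\}$ and, using the symmetry in $(x,y)$ of the target expression, assume without loss of generality $|y|\le |x|$. The natural subdivision of $(0,\infty)$ is at the three transition times $|y|^{2s},\,|x|^{2s},\,|x-y|^{2s}$: on each of the resulting sub-intervals, both Hardy prefactors simplify (they equal $1$ when $t^{1/(2s)}\ge |x|\vee|y|$, etc.) and the minimum defining $p_0$ is attained by one of its two branches. The integrand therefore reduces to a pure monomial in $t$ on each sub-interval, so that each contribution to $G_P$ becomes an elementary one-dimensional integral $\int_I t^\alpha\,dt$.

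A case-by-case computation, separating the near-diagonal regime $|x-y|\le |y|\le |x|$ (both points far from the origin, close to each other) from the remaining regimes where $|x-y|\approx |x|$, shows that the surviving terms collapse to
\begin{equation*}
|x-y|^{-(N-2s)},\quad \bigl(|x|^{-\ga}+|y|^{-\ga}\bigr)\,|x-y|^{-(N-2s-\ga)},\quad |x|^{-\ga}|y|^{-\ga}\,|x-y|^{-(N-2s-2\ga)},
\end{equation*}
whose sum is exactly, up to a multiplicative constant, the factored expression $|x-y|^{-(N-2s-2\ga)}\bigl(|x-y|^{-\ga}+|x|^{-\ga}\bigr)\bigl(|x-y|^{-\ga}+|y|^{-\ga}\bigr)$ of the claim. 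The standing hypothesis $\ga<(N-2s)/2$ is used precisely to guarantee $N-2s-2\ga>0$, so that all intermediate powers lie in the convergence range for the time integrals.

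The main obstacle I expect is not any single estimate but the combinatorial bookkeeping of the case analysis, together with producing the \emph{matching lower bound}: for this, in each geometric regime one must identify a single sub-interval of $t$ on which one of the three dominant terms above is realised, and use the lower bound in the heat-kernel estimate there. Once that is done, relaxing the auxiliary ordering $|y|\le |x|$ is immediate by the symmetry $G_P(x,y)=G_P(y,x)$, noted just after \eqref{Green}, and the two-sided estimate of the Lemma follows.
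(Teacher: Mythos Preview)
Your quoted heat-kernel estimate is the wrong one. For $0<\theta<\Lambda_{N,s}$ the potential $-\theta|x|^{-2s}$ is \emph{attractive}, and \cite[Theorem~1.1]{GGJP} gives the enhancement factors
\[
p(t,x,y)\;\approx\;\bigl(1+t^{\ga/2s}|x|^{-\ga}\bigr)\bigl(1+t^{\ga/2s}|y|^{-\ga}\bigr)\,p_0(t,x,y),
\]
i.e.\ factors $\ge 1$ (equivalently $(1\vee t^{1/(2s)}/|x|)^\ga$), not your $(1\wedge t^{1/(2s)}/|x|)^\ga\le 1$, which corresponds to the repulsive case $\theta<0$. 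With your formula the large-time tail is just $t^{-N/2s}$ and the term $|x|^{-\ga}|y|^{-\ga}|x-y|^{-(N-2s-2\ga)}$ can never arise from the integration; so the computation you outline cannot produce the three terms you claim, and the announced lower bound fails outright.

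Once you fix the kernel, your plan works but is substantially heavier than what the paper does. Because the correct prefactor is \emph{additive}, $(1+A)(1+B)=1+A+B+AB$, the only transition in $t$ that matters is the one in $p_0$ at $t=|x-y|^{2s}$. The paper simply splits the integral there, expands the product, and evaluates eight elementary power integrals; the condition $N-2s-2\ga>0$ makes them all converge, and both the upper and the lower bound follow simultaneously from the two-sided heat-kernel estimate. No ordering assumption on $|x|,|y|,|x-y|$, no splitting at $|x|^{2s}$ or $|y|^{2s}$, and no separate regime-by-regime lower-bound argument are needed.
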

\begin{proof}
From \cite[Theorem 1.1]{GGJP}, it follows that heat kernel $p$ of the Hardy operator in $\Rn \setminus \{ 0\}$ satisfies
\be\lab{heat-ker}
p(t,x,y)\approx (1+t^\frac{\ga}{2s}\,|x|^{-\ga})(1+t^\frac{\ga}{2s}\,|y|^{-\ga})\bigg(t^{-\frac{N}{2s}}\wedge \frac{t}{|x-y|^{N+2s}}\bigg), \quad x,y\not=0,\, t>0.
\ee
We claim that,  for $0<\theta<\Lambda_{N,s}$\begin{eqnarray}\nonumber
&&\int_0^\infty p(t,x, y) \, {\rm d}t\\\nonumber
&&\approx\bigg(|x-y|^{-(N-2s)}+(|x|^{-\ga}+|y|^{-\ga})|x-y|^{-(N-2s-\ga)}+|x|^{-\ga}|y|^{-\ga}|x-y|^{-(N-2s-2\ga)}\bigg),
\end{eqnarray}
for all $x,y\in\Rn\setminus\{0\}$, $x\not=y$. 
To see this, first of all note that 
$$t^{-\frac{N}{2s}}\geq \frac{t}{|x-y|^{N+2s}} \,\,\Longleftrightarrow \,\,t\leq |x-y|^{2s}.$$ Therefore
\begin{eqnarray}\lab{24-4-1}
\int_0^\infty p(t,x, y)\, {\rm d}t&\approx&\int_{0}^{|x-y|^{2s}}(1+t^\frac{\ga}{2s}\,|x|^{-\ga})(1+t^\frac{\ga}{2s}\,|y|^{-\ga})\frac{t}{|x-y|^{N+2s}} \, {\rm d}t\no\\
&&\quad+\int_{|x-y|^{2s}}^{\infty}(1+t^\frac{\ga}{2s}\,|x|^{-\ga})(1+t^\frac{\ga}{2s}\,|y|^{-\ga})t^{-\frac{N}{2s}}\, {\rm d}t\no\\
&:=&I_1+I_2 .
\end{eqnarray}
We deduce that
\begin{eqnarray}\lab{24-4-2}
I_1 &=& \frac{1}{|x-y|^{N+2s}}\int_{0}^{|x-y|^{2s}}t\,{\rm d}t +\frac{(|x|^{-\ga}+|y|^{-\ga})}{|x-y|^{N+2s}}\int_{0}^{|x-y|^{2s}}t^{\frac{\ga}{2s}+1}\,{\rm d}t\no\\
&&\quad+\frac{|x|^{-\ga}|y|^{-\ga}}{|x-y|^{N+2s}}\int_{0}^{|x-y|^{2s}}t^{\frac{\ga}{s}+1}\, {\rm d}t\no\\
&=&C(s,
\gamma)\bigg(\frac{1}{|x-y|^{N-2s}}+\frac{|x|^{-\ga}+|y|^{-\ga}}{|x-y|^{N-2s-\ga}}+\frac{|x|^{-\ga}|y|^{-\ga}}{|x-y|^{N-2s-2\ga}}\bigg).
\end{eqnarray}
Analogously, since $N-2s>2\ga$, we obtain
\begin{eqnarray}\lab{24-4-3}
I_2 &=& \int_{|x-y|^{2s}}^\infty t^{-\frac{N}{2s}}\,{\rm d}t+(|x|^{-\ga}+|y|^{-\ga})\int_{|x-y|^{2s}}^\infty t^{-\frac{N-\ga}{2s}}\, {\rm d}t\no\\
&&\quad+|x|^{-\ga}|y|^{-\ga}\int_{|x-y|^{2s}}^\infty t^{-\frac{N-2\ga}{2s}}\,{\rm d}t\no\\
&=&C(s,N,\gamma)\bigg(\frac{1}{|x-y|^{N-2s}}+\frac{|x|^{-\ga}+|y|^{-\ga}}{|x-y|^{N-2s-\ga}}+\frac{|x|^{-\ga}|y|^{-\ga}}{|x-y|^{N-2s-2\ga}}\bigg).
\end{eqnarray}
Finally, combining \eqref{24-4-2} and \eqref{24-4-3} with \eqref{24-4-1}  (and \eqref{Green}) we get the conclusion.
\end{proof}

\subsection{Semigroup and quadratic forms associated with $P$}

Let $\tilde{S}(t):= e^{-tP}$ be the  semigroup generated by $-P$ i.e.,
$$\mathrm{Dom}(-P)=\bigg\{f\in L^2(\Rn)\; :\; \lim_{t\to 0+} \frac{1}{t} (\tilde{S}(t)-1)f
\; \text{exists in}\; L^2(\Rn)\bigg\},$$
and
\be\lab{3-6-1} -P f := \lim_{t\to 0+} \frac{1}{t} (\tilde{S}(t)-1)f.\quad (f \in \mathrm{Dom}(-P))\ee

\begin{lemma}\label{L2-hardy}
Let $\varphi \in C_c(\Rn)$ and  $\tilde{S}(t)$ be defined as above. For $\alpha>0$,
define $u_{\alpha}$ by
\begin{equation}\label{eq:13-06-1}
u_{\alpha} : = \int_{0}^{\infty} e^{-\alpha \tau} \, \tilde{S}(\tau) \, \varphi\, {\rm d}\tau, \quad x\neq 0.
\end{equation}
Then there holds

$$
\int_{\Rn} \frac{u_{\alpha}^2(x)}{|x|^{2s}} \, {\rm d}x < \infty.
$$
\end{lemma}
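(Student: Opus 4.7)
The plan is to interpret $u_\alpha$ as the resolvent $(\alpha I+P)^{-1}\varphi$ and exploit the coercivity of the associated quadratic form on $\dot{H}^s(\Rn)$, which is guaranteed by the strict inequality $\theta<\Lambda_{N,s}$ via Remark~\ref{r:1}. First I would check that the integral in \eqref{eq:13-06-1} converges absolutely in $L^2(\Rn)$: since $\{\tilde S(\tau)\}$ is a contraction semigroup, $\|\tilde S(\tau)\varphi\|_{L^2}\leq\|\varphi\|_{L^2}$, so $u_\alpha\in L^2(\Rn)$ with $\|u_\alpha\|_{L^2}\leq\alpha^{-1}\|\varphi\|_{L^2}$. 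Then a standard semigroup computation, based on the identity
$$\tilde S(h)u_\alpha=e^{\alpha h}\bigg(u_\alpha-\int_0^h e^{-\alpha\tau}\tilde S(\tau)\varphi\,{\rm d}\tau\bigg)$$
and letting $h\to 0^+$, shows $u_\alpha\in\mathrm{Dom}(-P)$ with $(\alpha I+P)u_\alpha=\varphi$.

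The key step is then to pair $(\alpha I+P)u_\alpha=\varphi$ with $u_\alpha$ in $L^2$. Since the heat kernel $p(t,x,y)$ is symmetric, $-P$ is self-adjoint and non-negative on $L^2(\Rn)$, so $\mathrm{Dom}(P)$ is contained in the form domain $\mathrm{Dom}(P^{1/2})$ and $\langle Pu_\alpha,u_\alpha\rangle=\|P^{1/2}u_\alpha\|_{L^2}^2=\mathcal{E}(u_\alpha,u_\alpha)$, where
$$\mathcal{E}(u,u):=\frac{c_{N,s}}{2}\int_{\Rn}\int_{\Rn}\frac{|u(x)-u(y)|^2}{|x-y|^{N+2s}}\,{\rm d}x\,{\rm d}y-\theta\int_{\Rn}\frac{|u|^2}{|x|^{2s}}\,{\rm d}x$$
is the closed quadratic form associated with $P$. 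This yields
$$\alpha\|u_\alpha\|_{L^2}^2+\mathcal{E}(u_\alpha,u_\alpha)=\langle\varphi,u_\alpha\rangle\leq\|\varphi\|_{L^2}\|u_\alpha\|_{L^2}<\infty.$$
Since $\theta<\Lambda_{N,s}$, Remark~\ref{r:1} tells us that $\mathcal{E}(u,u)$ is equivalent to $\|u\|_{\dot{H}^s(\Rn)}^2$, so $u_\alpha\in\dot{H}^s(\Rn)$ with a quantitative bound on its norm. A direct application of the fractional Hardy inequality then gives
$$\int_{\Rn}\frac{u_\alpha^2(x)}{|x|^{2s}}\,{\rm d}x\leq\frac{1}{\Lambda_{N,s}}\|u_\alpha\|_{\dot{H}^s(\Rn)}^2<\infty,$$
which is the desired conclusion.

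The main obstacle is the rigorous identification $\langle Pu_\alpha,u_\alpha\rangle=\mathcal{E}(u_\alpha,u_\alpha)$ for $u_\alpha\in\mathrm{Dom}(P)$; one must invoke the self-adjointness of $P$ (coming from the symmetry of $p(t,x,y)$ recorded after \eqref{Green} and the results of \cite{GGJP}) together with the fact that, because of the subcritical Hardy bound $\theta<\Lambda_{N,s}$, the symmetric form initially defined on $C_c^\infty(\Rn)$ is bounded below and closable, with its closure coinciding with the form associated to the Friedrichs extension $P$. Alternatively, one can bypass this by first testing $(\alpha I+P)u_\alpha=\varphi$ against elements of $C_c^\infty(\Rn)$ and passing to the limit using the equivalence of norms in Remark~\ref{r:1} to extend the identity to $u_\alpha$ itself.
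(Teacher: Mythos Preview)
Your approach is genuinely different from the paper's, and it contains a circularity that you yourself flag as ``the main obstacle'' but do not actually resolve.

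The paper proves this lemma by brute force: it bounds $|u_\alpha(x)|\leq\int G_P(x,y)|\varphi(y)|\,{\rm d}y$, then uses the two-sided Green function estimate from Lemma~\ref{l:heat-ker} to get explicit pointwise control. Splitting into $B_R(0)$ and its complement and using $\gamma<(N-2s)/2$, one obtains $|u_\alpha(x)|\leq C|x|^{-\gamma}$ near the origin and sufficient decay at infinity, and then checks directly that $\int u_\alpha^2/|x|^{2s}\,{\rm d}x<\infty$. No quadratic-form identification is needed.

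Your route, by contrast, requires the identity
\[
\langle Pu_\alpha,u_\alpha\rangle=\frac{c_{N,s}}{2}\iint\frac{|u_\alpha(x)-u_\alpha(y)|^2}{|x-y|^{N+2s}}\,{\rm d}x\,{\rm d}y-\theta\int\frac{u_\alpha^2}{|x|^{2s}}\,{\rm d}x,
\]
and this is exactly Lemma~\ref{lem-quad}, which is stated \emph{only} for $f\in\mathcal{D}(\mathcal{E})=H^s(\Rn)$. From $u_\alpha\in\mathrm{Dom}(P)\subset\mathcal{D}(\tilde{\mathcal{E}})$ you get $\tilde{\mathcal{E}}[u_\alpha]<\infty$, but Remark~\ref{r:imp} says $\mathcal{E}[u_\alpha]\leq\tilde{\mathcal{E}}[u_\alpha]+\theta\int u_\alpha^2/|x|^{2s}\,{\rm d}x$, so you cannot conclude $u_\alpha\in\mathcal{D}(\mathcal{E})$ without already knowing the Hardy integral is finite --- which is precisely the assertion of the lemma. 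Indeed, look at the proof of Theorem~\ref{t:int form}: the paper explicitly invokes Lemma~\ref{L2-hardy} \emph{first}, and only \emph{then} uses Remark~\ref{r:imp} to deduce $u_\alpha\in\mathcal{D}(\mathcal{E})$. Your argument reverses this order and so is circular within the paper's framework.

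Both rescues you propose inherit the same problem. The ``Friedrichs extension'' route requires showing that the semigroup constructed in \cite{GGJP} coincides with the one generated by the KLMN form sum on $H^s(\Rn)$ (equivalently, that $\mathcal{D}(\tilde{\mathcal{E}})=\mathcal{D}(\mathcal{E})$ in the subcritical regime); this is plausible but it is an additional theorem you have not proved, and the paper does not supply it. The ``test against $C_c^\infty$ and pass to the limit'' route needs an approximation of $u_\alpha$ in a norm that controls both the Gagliardo seminorm and the Hardy term, i.e.\ it again presupposes $u_\alpha\in H^s(\Rn)$. So as written the proposal does not close, whereas the paper's pointwise estimate via Lemma~\ref{l:heat-ker} sidesteps the issue entirely.
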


\begin{proof}
Let $K := \mathrm{supp}(\varphi).$ We note  that $u_{\alpha}$ given in \eqref{eq:13-06-1}  is well defined since $\{\tilde S(t)\}$ is a contraction semigroup. Moreover, using  \eqref{eq:13-06-2} we get
\begin{equation}\lab{28-5-5}
u_{\alpha}(x) = \int_{0}^{\infty} e^{-\alpha \tau} \, \tilde{S}(\tau) \, \varphi\, {\rm d}\tau \leq \int_{\Rn} G_{P}(x, y) \,| \varphi(y)| \, {\rm d}y. 
\end{equation}
Fix  $R >>1$ be such that  $K \subset\subset B_{{R}/{2}}(0).$ We write 

$$
\int_{\Rn} \frac{u_{\alpha}^2(x)}{|x|^{2s}} \, {\rm d}x 
= \int_{B_R(0)}\frac{u_{\alpha}^2(x)}{|x|^{2s}} \, {\rm d}x 
+ \int_{\Rn\setminus B_R(0)} \frac{u_{\alpha}^2(x)}{|x|^{2s}} \, {\rm d}x.
$$
Let $ x \in \Rn\setminus B_R(0),$ and $y \in K,$ then $|x-y| \sim |x|.$ Therefore using the estimate of $G_{P}$ from Lemma~\ref{l:heat-ker} and H\"{o}lder inequality, we have 
\begin{align}\label{ext}
\int_{\Rn\setminus B_R(0)} \frac{u_{\alpha}^2(x)}{|x|^{2s}} \, {\rm d}x
&\leq \int_{\Rn\setminus B_R(0)} \frac{{\rm d}x}{|x|^{2s}} \left( \int_{\Rn} G_{P}(x, y) \, |\varphi(y)| \, {\rm d}y  \right)^2 \notag\\
& \leq C(K) \int_{\Rn\setminus B_R(0)} \frac{{\rm d}x}{|x|^{2s}} \int_{K} G_P^2(x, y) \, \varphi^2(y) \, {\rm d}y \notag \\
& \leq C  \int_{K} \int_{\Rn\setminus B_R(0)} \bigg(\frac{|x|^{-2(N-2s)} + (|x|^{-2\gamma} + |y|^{-2\gamma})|x|^{-2(N -2s - \gamma)} }{|x|^{2s}} \notag\\ 
&\qquad +\frac{|x|^{-2\gamma} |y|^{-2\gamma}|x|^{-2(N-2s - 2\gamma)}}{|x|^{2s}}\bigg)\, {\rm d}x \, {\rm d}y \notag \\
& \leq C \int_{K} \int_{\Rn\setminus B_R(0)} \Bigl( |x|^{-2(N -s)}+ |y|^{-2\ga}|x|^{-2(N-s)} \notag\\
&\qquad\qquad+ |y|^{-2 \gamma} |x|^{-2(N -s -\gamma)}  \Bigr)   \, {\rm d}x \, {\rm d}y < \infty,
\end{align}
since $N>2s$ and $\gamma \in (0, (N-2s)/2).$
For  $x \in B_R(0)$, $x\neq 0$,  using \eqref{28-5-5} and  Lemma \ref{l:heat-ker}, we have  
\begin{eqnarray*}
u_{\alpha}(x) &\leq& C\int_{K} G_{P}(x, y)\, {\rm d}y\\
&\leq&C\int_K\bigg[(x-y)^{-(N-2s)}+(|x|^{-\ga}+|y|^{-\ga})|x-y|^{-(N-2s-\ga)}\\
&&\qquad+|x|^{-\ga}|y|^{-\ga}|x-y|^{-(N-2s-2\ga)}\bigg]\, {\rm d}y\\
&\leq& C(R)(1+|x|^{-\ga})+C(1+|x|^{-\ga})\int_{K}|y|^{-\ga}|x-y|^{-(N-2s-\ga)}\, {\rm d}y\\
&\leq& {C}(R)|x|^{-\ga}\bigg(1+\int_{K}|y|^{-\ga}|x-y|^{-(N-2s-\ga)}\, {\rm d}y\bigg).
\end{eqnarray*}
Using Young's inequality we see that 
$$|y|^{-\ga}|x-y|^{-(N-2s-\ga)}\leq \frac{\ga}{N-2s}|y|^{-(N-2s)} +
\frac{N-2s-\ga}{N-2s} |x-y|^{-(N-2s)},$$
which implies, 
$$\int_{K}|y|^{-\ga}|x-y|^{-(N-2s-\ga)}\, {\rm d}y\leq C(R), \quad \text{for all}
\; x\in B_R(0),$$
that is, 
\begin{equation}\lab{30-5-2}
u_{\alpha}(x)\leq \frac{C(R)}{|x|^{\ga}}.
\end{equation}
Combining we get
\begin{eqnarray}\lab{int}
\int_{B_R(0)}\frac{u_{\alpha}^2(x)}{|x|^{2s}} \, {\rm d}x \leq C(R) \int_{B_R(0)} \frac{ {\rm d}x}{|x|^{2s + 2\gamma}} <\infty,
\end{eqnarray}
as $\gamma\in(0, ({N-2s})/{2})$ and $N>2s$. Hence \eqref{ext} and \eqref{int} proves the lemma.

\end{proof}

\medskip 

Before going further let us introduce quadratic form associated with the Hardy operator. We 
first define the quadratic form $\mathcal{E}$ of $(\Delta)^{s}:=-(-\Delta)^{s},$ in the usual way as in \cite{Fuku} (also see \cite{GGJP}) : 

\begin{equation}\lab{28-5-1}
\mathcal{E}[f] := \lim_{t \rightarrow 0+} \frac{(f - S(t)f, f )_{L^2}}{t}
= \frac{c_{N,s}}{2} \int_{\Rn} \int_{\Rn} \frac{(f(x) - f(y))^2}{|x - y|^{N + 2s}} \, {\rm d}x \, {\rm d}y,
\end{equation}
for $f \in L^{2}(\Rn)$ and $S(t)$ is the semigroup generated by $(\Delta)^s$. Moreover, the domain of the quadratic form is given by 

$$
\mathcal{D}(\mathcal{E}) := \{ f \in L^2(\Rn) : \mathcal{E}[f] < \infty \}. 
$$


Similarly, we define the quadratic form associated with  $-P$, where $P$ is the fractional Hardy operator. Recall that 
$\{ \tilde{S}(t)\}$  is the semigroup generated by $-P.$ We define 

$$
\tilde{\mathcal{E}}[f] := \lim_{t \rightarrow 0+} \frac{\big(f - \tilde{S}(t)f, f \big)_{L^2}}{t}, \quad f \in L^2(\Rn),
$$
and the domain 

$$
\mathcal{D}(\tilde{\mathcal{E}}) := \{ f \in L^2(\Rn) : \tilde{{\mathcal{E}}}[f] < \infty \}.
$$
Furthermore, we can define a bilinear form on $\mathcal{D}(\mathcal{\tilde E})$ as
follows: for $f, g\in \mathcal{D}(\mathcal{\tilde E})$
$$
\mathcal{\tilde E}(f, g)=\lim_{t\to 0+} \frac{1}{t} \big(f - \tilde S(t)f,\, g \big)_{L^2}.$$
Also note that 
\begin{equation}\label{28-5-6}\mathcal{\tilde E}(f, g)=\frac{1}{2} \left( \mathcal{\tilde E}[f+g]-\mathcal{\tilde E}[f]
-\mathcal{\tilde E}[g]\right).\end{equation}
For more details on bilinear form see \cite[Chapter 1]{Fuku}. 

This form plays a key role in our studies below. Now we recall an important lemma from \cite{GGJP} concerning the quadratic form of $\mathcal{E}$ and $\tilde{\mathcal{E}}.$

\begin{lemma}\label{lem-quad}\cite{GGJP}
We have $\mathcal{D}(\mathcal{E}) \subset \mathcal{D}(\tilde{\mathcal{E}})$ and 

$$
\tilde{\mathcal{E}}[f] = \mathcal{E}[f] - \theta\int_{\Rn} \frac{f^2(x)}{|x|^{2s}} \, {\rm d}x, \quad f \in \mathcal{D}(\mathcal{E}).
$$

\end{lemma}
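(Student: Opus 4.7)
The plan is to introduce the closed symmetric form
$$Q[f] := \mathcal{E}[f] - \theta \int_{\Rn} \frac{f^2(x)}{|x|^{2s}} \, {\rm d}x, \qquad \mathcal{D}(Q) := \mathcal{D}(\mathcal{E}),$$
identify its associated self-adjoint operator with $P$, and then invoke the classical equivalence between the quadratic form of a non-negative self-adjoint operator and the short-time limit \eqref{28-5-1}. The inclusion $\mathcal{D}(\mathcal{E}) \subset \mathcal{D}(\tilde{\mathcal{E}})$ would then be automatic from the identity $\tilde{\mathcal{E}} = Q$ and the bound $Q[f]\leq \mathcal{E}[f]$.

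First, I would verify that $Q$ is well-defined, closed, and non-negative on $\mathcal{D}(\mathcal{E}) = H^s(\Rn)$. The fractional Hardy inequality ensures that $\int_{\Rn} f^2/|x|^{2s} \, {\rm d}x$ is finite for every $f \in \mathcal{D}(\mathcal{E})$, and the assumption $\theta < \Lambda_{N,s}$ yields the two-sided comparison
$$\Bigl(1-\frac{\theta}{\Lambda_{N,s}}\Bigr)\,\mathcal{E}[f] \;\leq\; Q[f] \;\leq\; \mathcal{E}[f].$$
Hence the form-norm of $Q$ is equivalent to the $H^s(\Rn)$ norm, and closedness of $Q$ is inherited from that of $\mathcal{E}$.

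Next, I would invoke the representation theorem for closed symmetric forms (Kato, Theorem VI.2.1): there exists a unique non-negative self-adjoint operator $A$ on $L^2(\Rn)$ with $\mathcal{D}(A^{1/2}) = \mathcal{D}(Q)$ and $Q[f] = \|A^{1/2} f\|^2_{L^2}$. The spectral theorem then yields, as a monotone increasing limit,
$$\lim_{t\to 0+} \frac{(f - e^{-tA} f, f)_{L^2}}{t} = Q[f], \qquad f \in \mathcal{D}(Q).$$
A direct symmetrization computation on $C_c^\infty(\Rn \setminus \{0\})$ using \eqref{De-u} yields $Q(f,g) = \int_{\Rn} ((-\Delta)^s f - \theta f/|x|^{2s}) \, g \, {\rm d}x$, so $A$ and $P$ agree on this subspace, which is a form-core for $Q$ because the singleton $\{0\}$ has vanishing $s$-capacity in $\Rn$ when $N>2s$. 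Uniqueness of the subcritical Friedrichs/KLMN self-adjoint extension then identifies $A$ with the realization of $P$ underlying $\{\tilde S(t)\}$ in \cite{GGJP}, yielding $\tilde{\mathcal{E}} = Q$ on $\mathcal{D}(\mathcal{E})$.

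The main obstacle is precisely the last identification: one must ensure that the semigroup $\{\tilde S(t)\}$ constructed in \cite{GGJP} via heat-kernel bounds \eqref{heat-ker} is generated by the KLMN extension associated with $Q$, rather than by some other self-adjoint realization of the symmetric operator $P\vert_{C_c^\infty(\Rn\setminus\{0\})}$. A clean way to bypass this issue would be to show directly that $\tilde S(t)f$ solves $\partial_t u + Pu = 0$ with $u(0) = f$ for $f \in C_c^\infty(\Rn\setminus\{0\})$, and to exploit uniqueness of the Cauchy problem in the $L^2$-framework—made possible here by the strict coercivity $Q[f] \geq (1-\theta/\Lambda_{N,s})\,\mathcal{E}[f]$ inherited from the subcriticality assumption $\theta<\Lambda_{N,s}$.
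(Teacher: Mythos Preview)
The paper does not supply its own proof of this lemma: it is quoted verbatim from \cite{GGJP} (note the citation attached to the lemma header), so there is no ``paper's proof'' to compare against. Your sketch is therefore not a reproduction but an independent argument, and it is worth assessing on its own terms.

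Your approach is the standard form-theoretic one and is essentially sound up to the point you yourself flag. The steps (i)~well-definedness and closedness of $Q$ on $H^s(\Rn)$ via the Hardy inequality and the two-sided bound, (ii)~existence of the associated non-negative self-adjoint operator $A$, and (iii)~the spectral identity $\lim_{t\to 0+} t^{-1}(f-e^{-tA}f,f)=Q[f]$ are all correct and routine. The genuine content lies exactly where you say: identifying $A$ with the generator of the specific semigroup $\{\tilde S(t)\}$ built in \cite{GGJP}. That paper constructs $\tilde S(t)$ via Schr\"odinger perturbation of the stable semigroup (heat-kernel methods), not a~priori as $e^{-tA}$ for the KLMN operator, so one really must check the two coincide. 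Your proposed workaround via uniqueness of the $L^2$ Cauchy problem is viable in principle, but note that it requires knowing that $t\mapsto \tilde S(t)f$ is differentiable into $L^2$ with derivative $-P\tilde S(t)f$, which is not immediate from heat-kernel bounds alone. In \cite{GGJP} the identification is instead carried out at the level of quadratic forms using the explicit perturbation formula for the transition density and monotone-convergence arguments (this is where \cite[Proposition~5]{BDK}, cited in Remark~\ref{r:imp}, enters). If you want a self-contained argument, the cleanest route is probably to verify that both semigroups have the same Laplace transform (resolvent) on a dense set, which follows once you know both resolvents invert $\alpha+P$ on $C_c^\infty(\Rn\setminus\{0\})$.
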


\begin{remark}\lab{r:imp}
It is also follows from \cite[Proposition~5]{BDK} (also see \cite[(5.2)]{GGJP}) that 

\begin{equation}
\tilde{\mathcal{E}}[f] + \theta\int_{\Rn} \frac{f^2(x)}{|x|^{2s}} \, {\rm d}x \geq \mathcal{E}[f], 
\quad f \in L^2({\Rn}). 
\end{equation}

\end{remark}

\medskip

\subsection{Proof of Theorem \ref{t:int form}}

\begin{proof}

Recall $\{ \tilde{S}(t) := e^{-tP} \}$  is the semigroup generated by $-P.$ It is known (see \cite[Proposition~2.4]{GGJP}) that $\{ \tilde{S}(t) \}$ is a $L^2({\Rn})$ {\emph contraction semigroup}, i.e. 
$$\|\tilde{S}(t)\| \leq 1.$$ Therefore Hille-Yosida theorem states that $(\alpha I + P)$ is invertible for every 
$\alpha > 0$ and 
$$
(\alpha I + P)^{-1} = R(\alpha), \quad\text{where}\quad R(\alpha) v := \int_{0}^{\infty} e^{-\alpha \tau} \tilde{S}(\tau) \, v \, {\rm d} \tau.
$$
In particular, $R(\alpha) v \in \mathcal{D}(-P)\subset  \mathcal{D}(\tilde{\mathcal{E}})$.

Define $u_{\alpha} : = R(\alpha) \varphi.$  Then by above
 $(P + \alpha I) R(\alpha) = \mathrm{id}$ and therefore, it holds  that
$$
(P + \alpha I) u_{\alpha} = \varphi, \quad \text{in} \quad L^2(\Rn).
$$  
Thus,
\begin{equation}\lab{28-5-4}
\big( (P + \alpha I)u_{\alpha}, f \big)_{L^2(\Rn)} = \int_{\Rn} \varphi  \, f \, {\rm d} x \quad\forall\, f\in L^2(\Rn). 
\end{equation}
By Lemma \ref{L2-hardy}, we have 
$$\displaystyle\int_{\Rn} \frac{u_{\alpha}^2(x)}{|x|^{2s}} \, {\rm d}x < \infty$$ and $R(\alpha) \varphi\in  \mathcal{D}(\tilde{\mathcal{E}})$ also implies $\mathcal{\tilde E}(u_\alpha)<\infty$. Therefore, applying Remark \ref{r:imp}, we obtain $u_\alpha\in \mathcal{D}(\mathcal{E})$. Hence $u_\alpha\in{H}^s(\Rn)$ for each $\alpha>0$. Thus, by Sobolev inequality $u_\alpha\in\dot{H}^s(\Rn)$ for each $\alpha>0$.


 Moreover, from \eqref{3-6-1} it is easy to see that
$$(P u_\alpha, f)_{L^2(\Rn)}= \mathcal{\tilde E}( u_\alpha, f)   \quad\text{for any} \quad f\in \mathcal{D}(\mathcal{E}).$$ By \eqref{28-5-6}, we know
$$\mathcal{\tilde E}( u_\alpha, f)=\frac{1}{2} \left( \mathcal{\tilde E}[u_\alpha+f]-\mathcal{\tilde E}[f]
-\mathcal{\tilde E}[u_\alpha]\right).$$
Combining this with Lemma \ref{lem-quad} yields
$$(P u_\alpha, f)_{L^2(\Rn)}=\frac{c_{N,s}}{2}\int_{\Rn} \int_{\Rn} \frac{(u_{\alpha}(x) - u_{\alpha}(y))(f(x)-f(y))}{|x - y|^{N + 2s}} \, {\rm d}x \, {\rm d}y - 
\theta \int_{\Rn} \frac{u_{\alpha}f}{|x|^{2s}} \, {\rm d}x. $$

%

%
%
%

In particular, taking $f=u_\alpha$ in \eqref{28-5-4} and defining $K := \mathrm{supp}(\varphi)$, it follows
\begin{align*}
&&  \frac{c_{N,s}}{2}\int_{\Rn} \int_{\Rn} \frac{|u_{\alpha}(x) - u_{\alpha}(y)|^2}{|x - y|^{N + 2s}} \, {\rm d}x \, {\rm d}y - 
\theta \int_{\Rn} \frac{u_{\alpha}^2}{|x|^{2s}} \, {\rm d}x \, + \, 
\alpha \int_{\Rn} u_{\alpha}^2 \, {\rm d}x \\
&&\qquad= \int_{K}|x|^s\varphi\frac{u_\alpha}{|x|^s} \, {\rm d}x
\leq\tilde{C}(\varepsilon,K) \int_{K}   \varphi^2 \, {\rm d}x \,  + \, \varepsilon C(K) \int_{\Rn}  \frac{u_{\alpha}^2}{|x|^{2s}} \, {\rm d}x.
\end{align*}
Since $u_\alpha\in \dot{H}^s(\Rn)$, choosing $\eps>0$ such that $\theta+\eps<\Lambda_{N,s}$ and using Remark \ref{r:1}, we get  $\{ u_{\alpha} \}$ is uniformly bounded in $\dot{H}^{s}(\Rn)$ and $\|\sqrt{\alpha}u_\alpha\|_{L^2(\Rn)}$ is uniformly bounded. Hence there exists $u\in \dot{H}^s(\Rn)$ and $g\in L^2(\Rn)$ such that up to 
a subsequence, 
$$u_{\alpha{_k}} \rightharpoonup u\quad\text{in}\quad\dot{H}^s(\Rn)\quad\text{and}\quad u_{\alpha{_k}} \rightarrow u \quad a.e, $$ and 
$$\sqrt{\alpha_k}u_{\alpha_k}  \rightharpoonup g \quad\text{in}\quad L^2(\Rn)$$
 as $\alpha_k \rightarrow 0$. This implies, for $f\in C^\infty_c(\Rn)$ 
 $$\alpha\int_{\Rn}u_\alpha f \, {\rm d}x=\sqrt{\alpha}\big(\sqrt{\alpha}u_\alpha, f \big)_{L^2(\Rn)}\longrightarrow 0 \quad\text{as}\quad \alpha\to 0.$$
 Therefore, from \eqref{28-5-4}, it is easy see that 
$u$ satisfies $Pu=\varphi$ in the weak sense. Since
$$
u_{\alpha} = R(\alpha) \phi = \int_{0}^{\infty} e^{-\alpha \tau} \tilde{S}(\tau) \, \varphi \, {\rm d}\tau,
$$
using dominated convergence theorem and letting $\alpha \rightarrow 0$, we obtain 
$$
u_{\alpha} \rightarrow \tilde{u} : = \int_{0}^{\infty} \tilde{S}(\tau) \, \varphi \, {\rm d}\tau,\quad x\neq 0. 
$$ 
 (Here we have used $\tilde{S}(\tau) \, |\varphi|$ as the dominant function. Using Tonelli's theorem in the definition of $\tilde{S}(\tau) \, |\varphi|$, and arguing as in the proof of \eqref{30-5-2}, it follows that $$\int_0^\infty\tilde{S}(\tau) \, |\varphi(x)| \, {\rm d}\tau < \infty \quad \text{for all}\quad x\neq 0.$$ Hence $\tilde{S}(\tau) \, |\varphi|\in L^1(0,\infty)$.)

We also know from the definition of $\tilde{S}(\tau)\varphi$ that
$$\tilde{S}(\tau)\varphi=\int_{\Rn}p(t,x,y) \, \varphi(y) \, {\rm d}y,$$
where $p(.,.,.)$ is the heat kernel of $P$ as described in \eqref{heat-ker}. Therefore, using Fubini 
$$\int_{0}^{\infty} \tilde{S}(\tau) \, \phi \, {\rm d}\tau =\int_{\Rn} G_p(x, y) \, \phi \, {\rm d}y=\psi,\quad x\neq 0.$$
Hence $\psi = \tilde u$.  This implies $u_{\alpha} \to \psi$ a.e.. On the other hand, since we already proved $u_{\alpha} \to u$ a.e., it implies $u=\psi$ a.e. Hence it holds $P\psi=\phi$  in the weak sense.
\end{proof}

{\bf Proof of Corollary \ref{thm-1}:}\begin{proof}
From Theorem~\ref{t:int form} we know that $\psi:=\displaystyle\int_{\Rn} G_{P}(x, y) \varphi(y) \, {\rm d}y$ is a weak solution of ~\eqref{main-semi}. Moreover, from Remark \ref{r:1}, we see that weak solution of \eqref{main-semi} is unique. Therefore we get the conclusion. 
\end{proof}

\vspace{3mm}

{\bf Proof of Theorem \ref{l:fund-sol}}\begin{proof}
Let $x\neq 0$. To prove the theorem we need to show that
$$\int_{\Rn}G_P(x, z) P(f)(z) \, {\rm d}z=f(x) \quad \forall\, f\in C^\infty_c(\Rn).$$

Consider $\phi_n\in C^\infty_c(\Rn)$ such that $\phi_n\to\de_x$ in the sense of distributions. Furthermore, we can choose that $\phi_n\geq 0$ and  $\mathrm{supp}(\phi_n)\subset B_\kappa (x)$ for all $n$, where $\kappa<\frac{|x|}{2}$. 
Corresponding to $\phi_n$, we define 
$$\psi_n (z) := \displaystyle\int_{\Rn}   G_P(z, y)\phi_n (y) \, {\rm d}y,$$  where $G_P$ is the  Green function of $P$ as defined in \eqref{Green}. By Theorem \ref{t:int form}, $\psi_n\in \dot{H}^s(\Rn)$ and 
\begin{equation}\lab{28-5-8}
 \frac{c_{N,s}}{2}\int_{\Rn} \int_{\Rn} \frac{(\psi_n(z) - \psi_n(y))(f(z)-f(y))}{|z - y|^{N + 2s}} \, {\rm d}z\,{\rm d}y - 
\theta \int_{\Rn} \frac{\psi_nf}{|z|^{2s}} \, {\rm d}z=\int_{\Rn}\phi_n f {\rm d}z , 
\end{equation}
for all $ f\in C^\infty_c(\Rn)$.
Therefore,
\be\lab{6-6-1}\int_{\Rn}\psi_n(-\De)^sf \, {\rm d}z-\theta \int_{\Rn} \frac{\psi_nf}{|z|^{2s}} \, {\rm d}z=\int_{\Rn}\phi_n f \, {\rm d}z \quad \forall\, f\in C^\infty_c(\Rn), \ee

i.e. \begin{equation}\lab{30-5-1}\int_{\Rn}\psi_n(z) P(f)(z)\, {\rm d}z=\int_{\Rn}\phi_n(z) f(z)\, {\rm d}z \quad \forall\, f\in C^\infty_c(\Rn).\end{equation}
We point out that in order to use the integration by parts formula  in \eqref{6-6-1},   we argue by density since $\psi_n\in \dot{H}^s(\Rn)$ and we use the fact that  $f\in C^\infty_c(\Rn)$, which also implies  
$$|(-\Delta)^s f(x)|\leq \frac{C}{1+|x|^{N+2s}}.$$

Since the heat kernel $p(t, z,y)$ is continuous in $z$ (see \cite[Lemma 4.10]{GGJP}), it is easy to check that $z\mapsto G_P(z,y)$ is continuous
for $z\neq y$. Therefore,  $\psi_n(z)\to G_P(z, x)$ pointwise. Clearly, RHS of \eqref{30-5-1}$\to f(x)$ as $n\to\infty$. 

\medskip

{\bf Claim:} If $f\in  C^\infty_c(\Rn)$, then taking  the limit $n\to\infty$ in \eqref{30-5-1} yields 
$$\int_{\Rn}G_P(z,x) P(f)(z)\, {\rm d}z=f(x) .$$
 First we note that for $f\in C^\infty_c(\Rn)$, it is easy to check 
\begin{equation}\lab{30-5-4}| P f(z)| \leq C\left(\frac{1}{1+ |z|^{N+2s}}+\frac{1}{|z|^{2s}}\mathcal{\chi}_{\mathrm{supp}(f)} \right). 
\end{equation}
We split the proof of the claim in two steps.

\medskip

\noindent{\bf Step 1.} We show that for each $n$ 
\begin{equation*}
\int_{y\in B_\kappa(x)}\int_{\Rn}\phi_n(y)G_P(z,y) |P(f)(z)|\, {\rm d}z \, {\rm d}y<\infty.
\end{equation*}
Using Tonelli's theorem, we get
\begin{eqnarray}\lab{5-6-1}
&&\int_{y\in B_\kappa(x)}\int_{\Rn}\phi_n(y)G_P(z,y) |P(f)(z)|\, {\rm d}z\, {\rm d}y \no\\
&\leq&C_n\int_{\Rn}\bigg(\int_{y\in B_\kappa(x)}G_P(z,y)dy\bigg) |P(f)(z)| \, {\rm d}z \no\\
&=&C_n\int_{B_R(0)}\bigg(\int_{y\in B_\kappa(x)}G_P(z,y)dy\bigg) |P(f)(z)| \, {\rm d}z \no\\
&&\quad+
C_n\int_{\Rn\setminus B_R(0)}\bigg(\int_{y\in B_\kappa(x)}G_P(z,y)dy\bigg) |P(f)(z)| \, {\rm d}z.
\end{eqnarray}
Now for  $R>2|x|$, repeating an estimate as in the derivation of \eqref{30-5-2} we see that
$$\int_{B_R(0)}\bigg(\int_{y\in B_\kappa(x)}G_P(z,y)dy\bigg) |P(f)(z)| \, {\rm d}z\leq C\int_{B_R(0)}\frac{|P(f)(z)|}{|z|^\gamma}\, {\rm d}z.$$ Also it is easy to see that for the choice of $R$ and $z\in\Rn\setminus B_R(0)$, it holds
$$\bigg|\int_{y\in B_\kappa(x)}G_P(z,y)\, {\rm d}y\bigg|\leq C,$$ 
where the constant $C$ does not depend on $z$. Therefore from \eqref{5-6-1} and also using \eqref{30-5-4},  we obtain
\begin{eqnarray*}
\int_{y\in B_\kappa(x)}\int_{\Rn}\phi_n(y)G_P(z,y) |P(f)(z)| \, {\rm d}z \, {\rm d}y
&\leq& C\bigg(\int_{B_R(0)}\frac{{\rm d}z}{|z|^{\gamma+2s}}+\int_{\Rn\setminus B_R(0)}\frac{{\rm d}z}{|z|^{N+2s}}\bigg)\\
&<&\infty.
\end{eqnarray*}
This proves Step 1.

\medskip

\noindent{\bf Step 2.} We complete the proof of the claim in this step.
Using Step 1 and Fubini's theorem we see that 
$$\int_{\Rn}\psi_n(z) P(f)(z) \, {\rm d}z=\int_{\Rn} \phi_n(y) \left(\int_{\Rn}G_P(z,y) P(f)(z) \, {\rm d}z\right) {\rm d}y.$$
We recall that, $G_P(z,y)=G_P(y,z)$. Therefore, if we can show that 
$$y\longmapsto \int_{\Rn}G_P(z,y) P(f)(z)\, {\rm d}z=\int_{\Rn}G_P(y, z) P(f)(z) \, {\rm d}z,$$
is continuous at $y=x\neq 0$, then we can conclude 
$$\lim_{n\to\infty}\int_{\Rn} \phi_n(y) \left(\int_{\Rn}G_P(z,y) P(f)(z) \, {\rm d}z\right)\, {\rm d}y=\int_{\Rn}G_P(z,x) P(f)(z) \, {\rm d}z.$$
This would complete the proof of the claim.

\vspace{2mm}

Hence, we are left to show that $y\longmapsto \displaystyle\int_{\Rn}G_P(z,y) P(f)(z) \, {\rm d}z$ is continuous at $y=x\neq 0$.

\vspace{2mm}

To see this, first we observe that as $x\neq 0$ and $\ga\in (0,(N-2s)/{2})$, Lemma \ref{l:heat-ker} implies given $\eps>0$ there exists $\de,\, r_0\in(0, \frac{|x|}{3})$ , such that
\begin{equation}\lab{30-5-3}
\int_{B_{\de}(y)} G_P(y,z) \, {\rm d}z<\eps \quad\forall\, y\in B_{r_0}(x).
\end{equation}
Now choose $r_1<\min\{r_0, \frac{\de}{2}\}$. Then we have 
\be\lab{5-6-4} B_\frac{\de}{2}(x)\subset B_{\de}(y)  \quad\forall\, y\in B_{r_1}(x).\ee
Also, choose $\delta_1< \frac{|x|}{3}$ satisfying 

\be\lab{5-6-3} \displaystyle\int_{B_{\de_1}(0)}\frac{{\rm d}z}{|z|^{\ga+2s}}<\eps.\ee 


Now we write 
\begin{eqnarray}
\int_{\Rn}G_P(z,y) P(f)(z) \, {\rm d}z&=& \int_{\Rn\setminus \big( B_\frac{\de}{2}(x)\cup B_{\de_1}(0)\big)}G_P(z,y) P(f)(z) \, {\rm d}z \no\\
&&\quad+ \int_{B_\frac{\de}{2}(x)}G_P(z,y) P(f)(z) \, {\rm d}z \no\\
&&\qquad\quad + \int_{B_{\de_1}(0)}G_P(z,y) P(f)(z) \, {\rm d}z\no.
\end{eqnarray}
As we are interested in $y\to x$, let us assume $y\in B_\kappa(x)$. Also, by our choice,
$$B_{\de_1}(0)\cap B_\kappa(x)=\emptyset.$$ Thus using Lemma \ref{l:heat-ker}, \eqref{30-5-4} and \eqref{5-6-3} it follows that
\begin{equation}\label{eq27th June 1}
\left |\int_{B_{\de_1}(0)}G_P(z,y) P(f)(z) \, {\rm d}z\right|\leq C\int_{B_{\de_1}(0)}\frac{{\rm d}z}{|z|^{\ga+2s}}=O(\eps)\end{equation}
and
\begin{equation}\label{eq27th June 2}
\left |\int_{B_{\de_1}(0)}G_P(z,x) P(f)(z) \, {\rm d}z\right|\leq C\int_{B_{\de_1}(0)}\frac{{\rm d}z}{|z|^{\ga+2s}}=O(\eps).\end{equation}
Next, we observe from \eqref{30-5-4} that for $z\in B_\frac{\de}{2}(x)$, $|Pf(z)|\leq C,$ where $C$ is independent of~$z$. Therefore, from \eqref{5-6-4} and \eqref{30-5-3}, it follows
\begin{equation}\label{eq27th June 3}
\left| \int_{B_\frac{\de}{2}(x)}G_P(z,y) P(f)(z) \, {\rm d}z\right|< C \eps.
 \end{equation}
Further, by
dominated convergence theorem 
\begin{equation}\label{eq27th June 4}
\int_{\Rn\setminus \big( B_\frac{\de}{2}(x)\cup B_{\de_1}(0)\big)}G_P(z,y) P(f)(z) \, {\rm d}z
\xrightarrow{y\to x}
\int_{\Rn\setminus \big( B_\frac{\de}{2}(x)\cup B_{\de_1}(0)\big)}G_P(z,x) P(f)(z)\, {\rm d}z,
\end{equation}
where we use \eqref{30-5-4}. Therefore, using \eqref{30-5-3},  \eqref{eq27th June 1},  \eqref{eq27th June 2},   \eqref{eq27th June 3} and \eqref{eq27th June 4},   there exists a neighborhood of $x$ such that 
$$\left |\int_{\Rn}G_P(z,y) P(f)(z) \, {\rm d}z-\int_{\Rn}G_P(z,x) P(f)(z) \, {\rm d}z\right|\leq C \eps.$$
This completes the proof of the claim as $\eps$ is arbitrary.
\end{proof}

\medskip

{\bf Proof of Theorem \ref{t:int-gen}}:

\begin{proof}
Let $0\leq \varphi\in L^\frac{2N}{N+2s}(\Rn)$ and $\varphi$ be lower semi-continuous. Therefore, there exists a sequence of bounded continuous functions
$\varphi_n$ satisfying $0\leq \varphi_1\leq \varphi_2\leq \cdots\leq \varphi_n\leq \cdots \varphi$ and
$$\lim_{n\to\infty} \varphi_n(x)=\varphi(x),\quad \forall\, x.$$ Furthermore, by multiplying with 
suitable cut-off functions we can also assume that $\varphi_n\in C_c(\Rn)$.

Let 
\be\lab{16-6-5}\psi_n(x)=\int_{\Rn} G_p(x, y) \varphi_n(y) \, {\rm d}y, \quad x\neq 0.\ee
Then by Theorem \ref{t:int form}, $\psi_n\in \dot{H}^s(\Rn)$ and satisfies
\be\lab{14-6-1}(-\De)^s \psi_n -\theta\frac{\psi_n}{|x|^{2s}}= \varphi_n\quad\text{in }\quad \Rn,\ee
in the weak sense.
Since $\|\varphi_n\|_{L^\frac{2N}{N+2s}(\Rn)}\leq \|\varphi\|_{L^\frac{2N}{N+2s}(\Rn)}$,  taking $\psi_n$ as a test function in \eqref{14-6-1}, we obtain
\begin{eqnarray*}\frac{c_{N,s}}{2}\int_{\Rn}\int_{\Rn}\frac{|\psi_n(x)-\psi_n(y)|^2}{|x-y|^{N+2s}}\, {\rm d}x\, {\rm d}y\, - \,  \theta \, \int_{\Rn} \frac{|\psi_n|^2}{|x|^{2s}}\, {\rm d}x&=&\int_{\Rn}\varphi_n\psi_n \, {\rm d}x\\
&\leq& \|\varphi_n\|_{L^\frac{2N}{N+2s}(\Rn)}\|\psi_n\|_{L^{2^*}(\Rn)}\\
&\leq& \frac{C}{S}\|\psi_n\|_{\dot{H}^s(\Rn)},
\end{eqnarray*}
where $S$ is the Sobolev constant. Using Remark \ref{r:1} on the LHS of above expressions yields $\|\psi_n\|_{\dot{H}^s(\Rn)}\leq \tilde C\quad\forall\, n$. Therefore, there exists $\psi\in\dot{H}^s(\Rn)$ such that $\psi_n\rightharpoonup \psi$ in 
$\dot{H}^s(\Rn)$ and $\psi_n\to\psi$ a.e. From \eqref{14-6-1}, we also have
\begin{equation}\lab{14-6-2}
 \frac{c_{N,s}}{2}\int_{\Rn} \int_{\Rn} \frac{(\psi_n(z) - \psi_n(y))(f(z)-f(y))}{|x - y|^{N + 2s}} \, {\rm d}z\,{\rm d}y - 
\theta \int_{\Rn} \frac{\psi_nf}{|z|^{2s}} \, {\rm d}z=\int_{\Rn}\varphi_n f \, {\rm d}z , 
\end{equation}
for all $ f\in C^\infty_c(\Rn)$. Now we take the limit $n\to\infty$ on both sides of the above expressions. For the 2nd term on the LHS we can pass the limit inside the integral sign using Vitali's convergence theorem via Hardy inequality. Hence $\psi$ satisfies
\eqref{main-semi}. By uniqueness of weak solution, \eqref{main-semi} has unique solution as $\theta<\Lambda_{N,s}$. Further, from \eqref{16-6-5} we have
for $x\neq 0$,
$$\psi(x)=\lim_{n\to\infty}\int_{\Rn}G_P(x,y)\varphi_n(y) \, {\rm d}y.$$
Using \eqref{eq:27thJune}, applying Lebesgue monotone convergence theorem the above expression yields
$$\psi(x)= \int_{\Rn} G_p(x, y) \varphi(y) \, {\rm d}y, \quad  x\neq 0.$$
 This completes the proof.

\end{proof}

\medskip

{\bf Proof of Theorem \ref{t:unique}}: 
\begin{proof}
Let $\Phi$ satisfy
\be\lab{17-6-1}P \Phi(x, \cdot)=\delta_x\quad \text{in}\quad \mathbb{R}^N, \quad x\neq 0,\ee
in the sense of distribution and 
\be\lab{17-6-2}\psi_{\varphi}:=\int_{\Rn}\Phi(x,y)\varphi(y)\, {\rm d}y\in\dot{H}^s(\Rn)\quad\forall\quad 0\leq \varphi\in C_c^\infty(\Rn).\ee
Let $f\in C_c^\infty(\Rn)$ be arbitrary and we further choose $0\leq \varphi\in C_c^\infty(\Rn)$ arbitrarily, then from \eqref{17-6-1} we have
$$\int_{\Rn}\Phi(x,y)Pf(y)\, {\rm d}y=f(x), \quad x\neq 0.$$ Consequently,
\be\lab{17-6-3}
\int_{\Rn}\varphi(x)\bigg(\int_{\Rn}\Phi(x,y)Pf(y)\, {\rm d}y\bigg)\, {\rm d}x=\int_{\Rn}f\varphi\, {\rm d}x.
\ee

\medskip

{\bf Claim}: \be\lab{17-6-7}\int_{\Rn}\varphi(x)\bigg(\int_{\Rn}\Phi(x,y)Pf(y) \, {\rm d}y\bigg)\, {\rm d}x
=\int_{\Rn}Pf(y)\bigg(\int_{\Rn}\Phi(x,y)\varphi(x)\, {\rm d}x\bigg)\, {\rm d}y.\ee

Assuming the claim, first we complete the proof. Using the claim, we obtain from \eqref{17-6-3} that
\begin{eqnarray*}
\int_{\Rn}f\varphi\, {\rm d}x&=&\int_{\Rn}Pf(y)\psi_{\varphi}(y)\, {\rm d}y\\
&=&\int_{\Rn}\psi_{\varphi}(-\De)^sf \, {\rm d}y-\theta\int_{\Rn}\frac{f\psi_{\varphi}}{|y|^{2s}}\, {\rm d}y.
\end{eqnarray*}
Since $\psi_{\varphi}\in\dot{H}^s(\Rn)$ and $f\in C_c^\infty(\Rn)$, using an argument as in \eqref{28-5-8} --\eqref{30-5-1}  (see the line just after \eqref{30-5-1}) we obtain
$$\int_{\Rn}f\varphi\, {\rm d}x = \int_{\Rn}(-\De)^\frac{s}{2}\psi_{\varphi}(-\De)^\frac{s}{2}f \, {\rm d}y-\theta\int_{\Rn}\frac{f\psi_{\varphi}}{|y|^{2s}}\, {\rm d}y. $$
Since in the above expression $f\in C_c^\infty(\Rn)$ is arbitrary, we conclude 
 $\psi_{\varphi}$ satisfies
$$(-\De)^s\psi_{\varphi}-\theta\frac{\psi_{\varphi}}{|x|^{2s}}=\varphi,$$ in the weak sense. Therefore, using Corollary \ref{thm-1}, we have
\be\lab{17-6-4}
\psi_{\varphi}(x)=\int_{\Rn}G_P(x,y)\varphi(y)\, {\rm d}y, \quad x\neq 0.
\ee
Combining \eqref{17-6-4} with \eqref{17-6-2} yields
\be\lab{17-6-6}\int_{\Rn}\bigg(G_P(x,y)-\Phi(x,y)\bigg)\varphi(y)\, {\rm d}y=0, \quad x\neq 0.\ee
Since $\varphi\geq 0$ is an arbitrary $C_c^\infty$ function in $\Rn$ and
since $G_P$ and $\Phi$ are continuous functions in $y$ for $y\neq x$ and $y\neq 0$ and in $L^1_{loc}(\Rn)$ a.e. $x$ , we see from \eqref{17-6-6}
and a density argument that
$$\int_{\Rn}\bigg(G_P(x,y)-\Phi(x,y)\bigg)\varphi(y)\, {\rm d}y=0, \quad \text{for all}\;\; \varphi\in C_c(\Rn),$$
which in turn, implies
$G_P(x,\cdot)=\Phi(x,\cdot)$ in $\Rn\setminus\{x\}$. Since $x\neq 0$ is also arbitrary, $G_P=\Phi$ in $(\Rn\setminus\{0\})\times (\Rn\setminus\{0\})$.

\medskip

Therefore, we are left to prove the claim \eqref{17-6-7}. Since $\Phi$ and $\varphi$ are nonnegative functions, in order to justify that claim using Fubini, it's enough to show that $$\displaystyle\iint_{R^{2N}}|Pf(y)|\Phi(x,y)\varphi(x) \, {\rm d}(x\otimes y)<\infty.$$ Using Tonelli's theorem and \eqref{30-5-4}, we estimate the above integration as below

\begin{eqnarray}\lab{17-6-8}
\displaystyle\iint_{R^{2N}}|Pf(y)|\Phi(x,y)\varphi(x) \, {\rm d}(x\otimes y)&=&\int_{\Rn}|Pf(y)|\bigg(\int_{\Rn}\Phi(x,y)\varphi(x)\, {\rm d}x\bigg)\, {\rm d}y\no\\
&=&\int_{\Rn}|Pf(y)|\psi_{\varphi}(y)\, {\rm d}y\no\\
&\leq&C\int_{\Rn}\left(\frac{1}{1+ |y|^{N+2s}}+\frac{1}{|y|^{2s}}\mathcal{\chi}_{\mathrm{supp}(f)} \right)\psi_{\varphi}(y)\, {\rm d}y.\no\\
&&
\end{eqnarray}
Since $\psi_{\varphi}\in\dot{H}^s(\Rn)$, using H\"{o}lder inequality
$$\int_{\mathrm{supp}(f)}\frac{\psi_{\varphi}(y)}{|y|^{2s}}\, {\rm d}y<\bigg(\int_{\Rn}\frac{|\psi_{\varphi}(y)|^2}{|y|^{2s}}\, {\rm d}y\bigg)^\frac{1}{2}
\bigg(\int_{\mathrm{supp}(f)}\frac{{\rm d}y}{|y|^{2s}}\bigg)^\frac{1}{2}<\infty.$$
On the other hand, since $\frac{1}{1+ |y|^{N+2s}}\in L^\frac{2N}{N+2s}(\Rn)$ and  $\psi_{\varphi}\in\dot{H}^s(\Rn)$ implies $\psi_{\varphi}\in L^{2^*}_s(\Rn)$, using
H\"{o}lder inequality, we immediately get that 1st integral on the RHS of \eqref{17-6-8} is also finite. Hence the claim follows.

This completes the proof of the theorem. 
  
\end{proof}

\medskip

{\bf Acknowledgement}: The research of A.~Biswas is partially supported by an INSPIRE faculty fellowship (IFA13/MA-32)
and DST-SERB grants EMR/2016/004810, \\
MTR/2018/000028.
M.~Bhakta is partially supported by the INSPIRE faculty fellowship IFA13/MA-33 and SERB MATRICS grant. D.~Ganguly is partially supported by INSPIRE faculty fellowship (IFA17-MA98).

\end{document}